\documentclass[12pt]{article}

\usepackage[margin=1in]{geometry}
\usepackage{amsmath,amssymb,amsthm}
\usepackage{bm}
\usepackage{booktabs}
\usepackage{graphicx}
\usepackage{float}
\usepackage{cleveref}
\usepackage{xcolor}

\theoremstyle{definition}

\newtheorem{prop}{Proposition}
\newtheorem{coro}{Corollary}
\newtheorem{theorem}{Theorem}
\newtheorem{observation}{Observation}
\crefname{prop}{proposition}{propositions}
\Crefname{prop}{Proposition}{Propositions}

\newcommand{\R}{\mathbb{R}}
\newcommand{\C}{\mathbb{C}}
\newcommand{\V}[1]{\bm{#1}}
\newcommand{\cE}{\mathcal{E}}
\newcommand{\cI}{\mathcal{I}}

\begin{document}

\title{Compact and Infinite-Order Error Analysis\\
for Null-Space SVD Estimation}
\author{Xin Li, Jonathan Cohen, Rami Puzis}

\maketitle

\begin{abstract}
We study null-space estimation from a noisy matrix.  For a simple left null
space, we first derive an exact compact expression for the error of the
smallest left singular vector.  We then give an all-order series for the SVD
vector and projector, followed by compact and consistently truncated series
forms for the fixed-realization empirical risk and conditional population
generalization risk.  The recursion extends to a multiple-dimensional null
space by following the complete invariant subspace.  The convergence radius
is not inferred from an error plot: it is computed independently from the
nearest complex exceptional point that joins a retained eigenvalue branch to
its complement.  A reduced-nullity experiment shows that moving this
spectral boundary can increase the radius, although the improvement is not
monotone in the retained nullity.  For individually ordered null directions
under Gaussian training with \(\tau\geq m\), we prove that the Wishart
splitting matrix \(W\) gives a strict second-order empirical ranking.  Gaussian averaging
equalizes the leading generalization
risks at both small and very large noise, while a column-swap theorem proves
strict expected generalization ranking for an isotropic signal subspace.  For
unequal spikes, an exact population-overlap criterion and a simultaneous
\(99\%\) Monte Carlo confidence certificate explain the observed intermediate
ranking.  A sixth-order risk correction improves the lower-crossover estimate
in the reported experiment.  This
equal--ranked--equal phenomenon is a
finite-sample diagnostic related to spectral mixing, but its tolerance
crossings, the exceptional-point radius, and the asymptotic BBP threshold are
three distinct quantities.
\end{abstract}

\section{Contributions}

The main contributions are as follows.
\begin{enumerate}
    \item We give an exact, compact
    reconstruction of the smallest left singular vector for simple nullity.
    We then derive scalar, block, and basis-free projector recursions to
    arbitrary order, together with exact fixed-realization compact and
    coefficient forms for the empirical and population generalization risks.

    \item We explain the different rank behavior of the two risks.  Empirical branch risk is exactly the corresponding ordered sample eigenvalue divided by the number of measurements \(\tau\). 
    Its first nonzero small-noise splitting is governed by the effective matrix \(W\).
    Under Gaussian training with \(\tau\geq m\),
    this matrix is a nonsingular Wishart and gives a strict expected second-order ranking.
    For generalization risk, the stated \(L^1\) remainder conditions give common small- and large-noise endpoint laws.
    Under Gaussian training with \(\tau\geq m\), we prove exact strict ranking for an isotropic signal subspace.
    For unequal signal eigenvalues, we give an exact cumulative population-overlap criterion.

    \item Under generic boundary collisions, we characterize the convergence
    disk of the projector series by the nearest complex exceptional point
    joining a retained eigenvalue sheet to its complement.  We also identify
    the possible cancellation condition for a scalar generalization risk and
    show why changing the retained nullity can enlarge or shrink the radius:
    it moves the spectral boundary and can expose a different collision.

    \item Numerical experiments verify the compact identities and convergence
    of the arbitrary-order series.  They also verify the exceptional-point radii and the nonmonotonic effect of reducing nullity.
    For the reported unequal-spike example, a simultaneous \(99\%\) Monte Carlo confidence certificate verifies the overlap inequalities and expected generalization ranking at a representative intermediate-noise point.  Calibrated fourth- and sixth-order terms accurately predict the lower visibility crossing, while a calibrated inverse-noise coefficient predicts the upper crossing.
\end{enumerate}

\section{Problem setting}

Let
\begin{equation}
    Z=HX\in\R^{m\times \tau},
    \qquad
    E(\sigma)=\sigma\cE,
    \qquad
    \widetilde Z(\sigma)=Z+E(\sigma),
    \label{eq:data-model}
\end{equation}
where \(H\in\R^{m\times n}\), \(X\in\R^{n\times\tau}\), and
\(\cE\in\R^{m\times\tau}\) is a fixed standardized unit-variance noise realization.
Set
\begin{equation}
    r=\operatorname{rank}(Z),
    \qquad q_0=m-r,
\end{equation}
and let \(Q_0\in\R^{m\times q_0}\) have orthonormal columns spanning
\(\operatorname{Null}(Z^T)\).  If \(X\) has full row rank and
\(\operatorname{rank}(H)=n\), then \(r=n\), \(q_0=m-n\), and the
measurement-based and system-based left null spaces coincide.

For \(1\leq q\leq q_0\), the noisy \(q\)-dimensional estimator is
\begin{equation}
    \widehat Q_q(\sigma)
    \in\arg\min_{U^TU=I_q}
        \|\widetilde Z(\sigma)^TU\|_F^2.
    \label{eq:subspace-estimator}
\end{equation}
The columns of $\widehat Q_q(\sigma)$ are the \(q\) smallest left singular vectors of \(\widetilde Z\).
Because a basis may rotate without changing its subspace,
the invariant object is the orthogonal projector
\begin{equation}
    \widehat P_q(\sigma)=\widehat Q_q(\sigma)\widehat Q_q(\sigma)^T.
    \label{eq:noisy-projector}
\end{equation}
The choice \(q=q_0\) estimates the complete clean null space.  Choosing
\(q<q_0\) will be called \emph{reducing the retained nullity}.

For simple-nullity case \(q_0=1\), write \(Q_0=\V{\eta}\), where
\begin{equation}
    Z^T\V{\eta}=\V{0},
    \qquad \|\V{\eta}\|_2=1.
\end{equation}
After selecting the sign so that
\(\V{\eta}^T\widehat{\V{\eta}}>0\), define
\begin{equation}
    \V{\epsilon}(\sigma)
      =\widehat{\V{\eta}}(\sigma)-\V{\eta}.
    \label{eq:error-definition}
\end{equation}
Throughout, bold \(\V{\epsilon}\) denotes a vector estimation error.  
We use \(R_{\mathrm{emp},q}\) and \(R_{\mathrm{gen},q}\) for scalar risks conditional on one finite training realization, and
\(\varepsilon_{\mathrm{emp},q}\) and \(\varepsilon_{\mathrm{gen},q}\) for
the corresponding expectations over training realizations.

\section{Exact eigenvalue problem and compact solution}

The SVD estimator is equivalent to a symmetric eigenvalue problem.  
Define
\begin{align}
    A(\sigma)
      &=\widetilde Z(\sigma)\widetilde Z(\sigma)^T
        =A_0+\sigma A_1+\sigma^2A_2,
        \label{eq:gram-expansion}\\
    A_0&=ZZ^T,
    &A_1&=Z\cE^T+\cE Z^T,
    &A_2&=\cE\cE^T.
    \label{eq:coefficient-matrices}
\end{align}
For the simple-nullity case \(q_0=1\), the exact estimator satisfies
\begin{equation}
    A(\sigma)\widehat{\V{\eta}}(\sigma)
      =\lambda(\sigma)\widehat{\V{\eta}}(\sigma),
    \qquad
    \lambda(\sigma)
      =\|\widetilde Z(\sigma)^T\widehat{\V{\eta}}(\sigma)\|_2^2.
    \label{eq:exact-eigenproblem}
\end{equation}

\subsection{Compact-form solution for a simple null space}

In this subsection \(q_0=1\).
For the noise matrix \(E=E(\sigma)=\sigma\cE\), define
\begin{align}
    G&=(ZZ^T)^+,
    &K&=GZE^T=(Z^+)^TE^T,
    \label{eq:compact-KG}\\
    L&=GEZ^T,
    &M&=GEE^T,
    \label{eq:compact-LM}\\
    \mathcal{Q}&=I_m+K+L+M-\lambda G.
    \label{eq:compact-Q}
\end{align}

\begin{prop}[Exact compact representation and scalar closure]
\label{prop:compact}
Let \(\lambda\in\R\) be such that \(\mathcal Q=\mathcal Q(\sigma,\lambda)\)
is invertible, and define
\begin{equation}
    \V{w}=\mathcal{Q}^{-1}\V{\eta},
    \qquad
    \V{v}=\mathcal{Q}^{-1}(K+M)\V{\eta}.
    \label{eq:compact-w-v}
\end{equation}
Then
\begin{equation}
    \V{\eta}^T\V{w}=1,
    \qquad
    \V{v}=\V{\eta}-\V{w}.
    \label{eq:compact-w-v-identities}
\end{equation}
Moreover, \(\lambda\) is an eigenvalue of
\(A=(Z+E)(Z+E)^T\) if and only if it satisfies the scalar closure
\begin{equation}
    \boxed{
    F(\sigma,\lambda)
      :=\lambda-\V{\eta}^TE(Z+E)^T\V{w}=0.}
    \label{eq:compact-secular-condition}
\end{equation}
For any such root, the two unit eigenvectors on this eigenline and their
errors are
\begin{equation}
    \widehat{\V{\eta}}_{\pm}
      =\mathbin{\pm}\frac{\V{w}}{\|\V{w}\|_2},
    \qquad
    \V{\epsilon}_{\pm}
      =\mathbin{\pm}\frac{\V{w}}{\|\V{w}\|_2}-\V{\eta}.
    \label{eq:compact-two-signs}
\end{equation}
The orientation convention
\(\V{\eta}^T\widehat{\V{\eta}}>0\) selects the plus branch, so
\begin{equation}
    \boxed{\V{\epsilon}=\frac{\V{w}}{\|\V{w}\|_2}-\V{\eta}.}
    \label{eq:compact-oriented-solution}
\end{equation}

Equivalently, let
\begin{equation}
    \alpha_{\pm}=-1\mathbin{\pm}\frac{1}{\|\V{w}\|_2}.
    \label{eq:compact-alpha-simple}
\end{equation}
Then
\begin{equation}
    \boxed{
    \V{\epsilon}_{\pm}
      =\mathcal{Q}^{-1}\bigl[\alpha_{\pm}I_m-K-M\bigr]\V{\eta}
      =\alpha_{\pm}\V{w}-\V{v}.}
    \label{eq:compact-error}
\end{equation}
Normalization also gives the following unsimplified quadratic formula for
\(\alpha\).  It is equivalent to \eqref{eq:compact-alpha-simple}:
\begin{equation}
    \boxed{
    \alpha=
    \frac{
      \V{w}^T\V{v}-\V{\eta}^T\V{w}
      \mathbin{\pm}
      \sqrt{
        (\V{\eta}^T\V{w}-\V{w}^T\V{v})^2
        -\|\V{w}\|_2^2
          (\|\V{v}\|_2^2-2\V{\eta}^T\V{v})
      }
    }{\|\V{w}\|_2^2}.}
    \label{eq:compact-alpha}
\end{equation}

Writing
\(\beta=\V{\eta}^T\widehat{\V{\eta}}\), the oriented eigenpair is
equivalently characterized by the closed compact system
\begin{equation}
 \boxed{
 \mathcal Q\widehat{\V{\eta}}=\beta\V{\eta},
 \qquad
 \lambda\beta=\V{\eta}^TE(Z+E)^T\widehat{\V{\eta}},
 \qquad
 \widehat{\V{\eta}}^T\widehat{\V{\eta}}=1,
 \qquad \beta>0.}
 \label{eq:closed-compact-system}
\end{equation}
Along \(E(\sigma)=\sigma\cE\), there is a unique analytic root of
\eqref{eq:compact-secular-condition} near \((\sigma,\lambda)=(0,0)\)
with \(\lambda(0)=0\).  It is the eigenvalue branch descending from the
clean zero eigenvalue.
\end{prop}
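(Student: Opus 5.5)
The plan rests on a single algebraic identity that converts the eigenproblem \(A\V{x}=\lambda\V{x}\) into the compact operator \(\mathcal Q\). Since \(q_0=1\), the matrix \(GZZ^T=(ZZ^T)^+ZZ^T\) is the orthogonal projector onto \(\operatorname{Range}(Z)\), namely \(I_m-\V{\eta}\V{\eta}^T\). Expanding \(A=ZZ^T+ZE^T+EZ^T+EE^T\) and multiplying by \(G\) gives
\begin{equation*}
G(A-\lambda I_m)=I_m-\V{\eta}\V{\eta}^T+K+L+M-\lambda G=\mathcal Q-\V{\eta}\V{\eta}^T .
\end{equation*}
I will also use that \(G\) is symmetric with \(G\V{\eta}=\V{0}\). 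Because \(\V{\eta}^TG=\V{0}^T\) and each of \(K,L,M,G\) begins with a factor \(G\), we get \(\V{\eta}^T\mathcal Q=\V{\eta}^T\). Moreover \(L\V{\eta}=GEZ^T\V{\eta}=\V{0}\), so \(\mathcal Q\V{\eta}=\V{\eta}+(K+M)\V{\eta}\).

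\textbf{Identities \eqref{eq:compact-w-v-identities}.} Applying \(\V{\eta}^T\mathcal Q=\V{\eta}^T\) to \(\mathcal Q\V{w}=\V{\eta}\) gives \(\V{\eta}^T\V{w}=\V{\eta}^T\V{\eta}=1\). The relation \(\mathcal Q(\V{\eta}-\V{w})=\mathcal Q\V{\eta}-\V{\eta}=(K+M)\V{\eta}\) together with invertibility of \(\mathcal Q\) gives \(\V{v}=\V{\eta}-\V{w}\).

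\textbf{Closure \eqref{eq:compact-secular-condition}.} Since \(\V{\eta}^TZ=\V{0}^T\), we have \(\V{\eta}^TA=\V{\eta}^TE(Z+E)^T\), and hence
\begin{equation*}
\V{\eta}^T(A-\lambda I_m)\V{w}=\V{\eta}^TE(Z+E)^T\V{w}-\lambda\,\V{\eta}^T\V{w}=-F(\sigma,\lambda).
\end{equation*}
\begin{itemize}
\item If \(F=0\): from the identity, \(G(A-\lambda)\V{w}=\mathcal Q\V{w}-\V{\eta}=\V{0}\). This places \((A-\lambda)\V{w}\) in \(\operatorname{Null}(G)=\operatorname{span}\{\V{\eta}\}\), so \((A-\lambda)\V{w}=c\V{\eta}\) with \(c=\V{\eta}^T(A-\lambda)\V{w}=-F=0\). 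Since \(\V{w}\neq\V{0}\), \(\lambda\) is an eigenvalue with eigenvector \(\V{w}\).
\item Conversely, suppose \(A\V{x}=\lambda\V{x}\) with \(\V{x}\neq\V{0}\). Then \(\mathcal Q\V{x}=(\V{\eta}^T\V{x})\V{\eta}\), so \(\V{x}=(\V{\eta}^T\V{x})\V{w}\) by invertibility. The coefficient \(\V{\eta}^T\V{x}\) must be nonzero, otherwise \(\mathcal Q\V{x}=\V{0}\) forces \(\V{x}=\V{0}\). So \(\V{w}\) is an eigenvector and \(F=0\).
\end{itemize}
This converse also shows the eigenspace is the line spanned by \(\V{w}\). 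That gives \eqref{eq:compact-two-signs}. Orientation follows because \(\V{\eta}^T(\pm\V{w}/\|\V{w}\|_2)=\pm1/\|\V{w}\|_2\), and only the plus sign makes this positive.

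\textbf{Remaining formulas.} Formula \eqref{eq:compact-error} is a rewriting: \(\pm\V{w}/\|\V{w}\|_2-\V{\eta}=\alpha_\pm\V{w}+(\V{w}-\V{\eta})=\alpha_\pm\V{w}-\V{v}\), and \(\V{w}=\mathcal Q^{-1}\V{\eta}\), \(\V{v}=\mathcal Q^{-1}(K+M)\V{\eta}\) give the operator form. For \eqref{eq:compact-alpha}, impose \(\|\V{\eta}+\alpha\V{w}-\V{v}\|_2^2=1\). This is a quadratic in \(\alpha\) with leading coefficient \(\|\V{w}\|_2^2\), linear coefficient \(2(\V{\eta}^T\V{w}-\V{w}^T\V{v})\), and constant term \(\|\V{v}\|_2^2-2\V{\eta}^T\V{v}\). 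The quadratic formula gives \eqref{eq:compact-alpha}. Substituting \(\V{\eta}^T\V{w}=1\) and \(\V{v}=\V{\eta}-\V{w}\) collapses the roots to \(-1\pm1/\|\V{w}\|_2\).

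\textbf{Closed system \eqref{eq:closed-compact-system}.} Multiply the eigen-equation by \(G\) to get \(\mathcal Q\widehat{\V{\eta}}=\beta\V{\eta}\). Pair it with \(\V{\eta}^T\) to get \(\lambda\beta=\V{\eta}^TE(Z+E)^T\widehat{\V{\eta}}\). The converse runs through the same \(\operatorname{Null}(G)\) argument as above.

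\textbf{Analytic branch.} At \((\sigma,\lambda)=(0,0)\) we have \(\mathcal Q=I_m\), so \(\mathcal Q^{-1}\) and hence \(F\) are jointly analytic nearby. Also \(F(0,\lambda)=\lambda\), because \(E(0)=0\), so \(\partial_\lambda F(0,0)=1\). The analytic implicit function theorem then gives a unique analytic root \(\lambda(\sigma)\) with \(\lambda(0)=0\). By the equivalence above it is an eigenvalue of \(A(\sigma)\). Since \(0\) is a simple eigenvalue of \(A_0\) and eigenvalues vary continuously, it is the branch descending from the clean zero eigenvalue.

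I expect the main obstacle to be the converse direction and the one-dimensionality of the eigenline. The key point is that \(\operatorname{Null}(G)=\operatorname{span}\{\V{\eta}\}\), which uses \(q_0=1\), combined with invertibility of \(\mathcal Q\) to exclude eigenvectors orthogonal to \(\V{\eta}\); I would verify this first.
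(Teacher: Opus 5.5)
Your proposal is correct and follows the paper's proof essentially step by step: the identity \(G(A-\lambda I_m)=\mathcal Q-\V{\eta}\V{\eta}^T\), the left-invariance \(\V{\eta}^T\mathcal Q=\V{\eta}^T\), the \(\operatorname{Null}(G)=\operatorname{span}\{\V{\eta}\}\) argument in both directions of the closure, the normalization quadratic, and the analytic implicit-function theorem with \(\mathcal Q(0,0)=I_m\). The only variation is in the converse of the closed system, where you reapply the \(\operatorname{Null}(G)\) argument directly to \(\widehat{\V{\eta}}\) while the paper uses invertibility to write \(\widehat{\V{\eta}}=\beta\V{w}\) and reduce to \(\beta F=0\); both are valid, though note that the scalar equation \(\lambda\beta=\V{\eta}^TE(Z+E)^T\widehat{\V{\eta}}\) comes from pairing the eigen-equation \(A\widehat{\V{\eta}}=\lambda\widehat{\V{\eta}}\) with \(\V{\eta}^T\), not from pairing \(\mathcal Q\widehat{\V{\eta}}=\beta\V{\eta}\).
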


\begin{proof}
Simple nullity gives
\begin{equation}
  GZZ^T=(ZZ^T)^+ZZ^T
    =P_{\operatorname{Range}(Z)}
    =I_m-\V{\eta}\V{\eta}^T,
  \qquad
  G\V{\eta}=\V{0}.
  \label{eq:compact-pseudoinverse-identities}
\end{equation}
Expanding \(A=(Z+E)(Z+E)^T\) and using the definitions of \(K,L,M\)
shows directly where \(\mathcal Q\) comes from:
\begin{align}
 G(A-\lambda I_m)
 &=GZZ^T+GZE^T+GEZ^T+GEE^T-\lambda G \notag\\
 &=\mathcal Q-\V{\eta}\V{\eta}^T.
 \label{eq:compact-residual-identity}
\end{align}
Because \(G\) is symmetric and \(G\V{\eta}=\V{0}\), every nonidentity
term in \(\mathcal Q\) has zero left product with \(\V{\eta}^T\).  Hence
\begin{equation}
 \V{\eta}^T\mathcal Q=\V{\eta}^T,
 \qquad
 \V{\eta}^T\V{w}
   =\V{\eta}^T\mathcal Q^{-1}\V{\eta}=1.
 \label{eq:compact-w-normalization}
\end{equation}
Also \(Z^T\V{\eta}=\V{0}\), so
\(L\V{\eta}=G\V{0}=\V{0}\) and \(G\V{\eta}=\V{0}\).  Therefore
\begin{equation}
 \mathcal Q\V{\eta}=\V{\eta}+(K+M)\V{\eta},
\end{equation}
and multiplication by \(\mathcal Q^{-1}\) proves
\(\V{v}=\V{\eta}-\V{w}\).

To prove the scalar closure, put
\(\V{r}=(A-\lambda I_m)\V{w}\).  Since
\(\mathcal Q\V{w}=\V{\eta}\),
\eqref{eq:compact-residual-identity} and
\eqref{eq:compact-w-normalization} give
\begin{equation}
 G\V{r}
 =\mathcal Q\V{w}
  -\V{\eta}\V{\eta}^T\V{w}
 =\V{0}.
\end{equation}
Thus \(\V{r}\in\operatorname{Null}(G)=\operatorname{span}\{\V{\eta}\}\).
Furthermore, \(\V{\eta}^TZZ^T=\V{0}^T\) and
\(\V{\eta}^TZE^T=\V{0}^T\), so
\begin{equation}
 \V{\eta}^T\V{r}
 =\V{\eta}^TE(Z+E)^T\V{w}-\lambda
 =-F(\sigma,\lambda).
\end{equation}
Consequently, \(F(\sigma,\lambda)=0\) if and only if \(\V{r}=\V{0}\),
so every root of the scalar closure is an eigenvalue.  Conversely, suppose
\((A-\lambda I_m)\V{u}=\V{0}\) for some \(\V{u}\ne\V{0}\).  Applying
\eqref{eq:compact-residual-identity} gives
\begin{equation}
 \mathcal Q\V{u}
   =(\V{\eta}^T\V{u})\V{\eta}.
\end{equation}
If \(\V{\eta}^T\V{u}=0\), invertibility of \(\mathcal Q\) would imply
\(\V{u}=\V{0}\), a contradiction.  Hence
\(\gamma:=\V{\eta}^T\V{u}\ne0\), and
\begin{equation}
 \V{u}=\gamma\mathcal Q^{-1}\V{\eta}=\gamma\V{w}.
\end{equation}
Thus \(\V{w}\) is an eigenvector and the preceding calculation forces
\(F(\sigma,\lambda)=0\).  It also shows that the eigenspace is the single
line generated by \(\V{w}\).  This proves the equivalence in
\eqref{eq:compact-secular-condition}; normalization gives
\eqref{eq:compact-two-signs}, and
\eqref{eq:compact-w-normalization} selects the plus sign.

Using \(\V{v}=\V{\eta}-\V{w}\) and
\(\alpha_{\pm}=-1\pm\|\V{w}\|_2^{-1}\), we obtain
\begin{equation*}
 \alpha_{\pm}\V{w}-\V{v}
 =\mathbin{\pm}\frac{\V{w}}{\|\V{w}\|_2}-\V{\eta},
\end{equation*}
which proves \eqref{eq:compact-error}.  Alternatively, substituting
\(\V{\epsilon}=\alpha\V{w}-\V{v}\) into
\(\|\V{\eta}+\V{\epsilon}\|_2^2=1\) gives
\begin{equation}
 \|\V{w}\|_2^2\alpha^2
 +2(\V{\eta}^T\V{w}-\V{w}^T\V{v})\alpha
 +\|\V{v}\|_2^2-2\V{\eta}^T\V{v}=0.
\end{equation}
The quadratic formula is \eqref{eq:compact-alpha}; applying
\eqref{eq:compact-w-v-identities} reduces its roots to
\eqref{eq:compact-alpha-simple}.

For the oriented eigenvector, set
\(\beta=\V{\eta}^T\widehat{\V{\eta}}=\|\V{w}\|_2^{-1}\).
Then \(\widehat{\V{\eta}}=\beta\V{w}\), so
\(\mathcal Q\widehat{\V{\eta}}=\beta\V{\eta}\).  Multiplying
\eqref{eq:compact-secular-condition} by \(\beta\) gives the second identity
in \eqref{eq:closed-compact-system}; the third is normalization.
Conversely, the first equation of \eqref{eq:closed-compact-system} and
invertibility of \(\mathcal Q\) give
\(\widehat{\V{\eta}}=\beta\V{w}\).  Its normalization and \(\beta>0\)
give \(\beta=\|\V{w}\|_2^{-1}\), while the second equation is
\(\beta F(\sigma,\lambda)=0\).  Thus \(F(\sigma,\lambda)=0\), proving
the claimed equivalence of the closed system and the oriented eigenpair.

Finally, for \(E(\sigma)=\sigma\cE\), the function
\begin{equation}
 F(\sigma,\lambda)
 =\lambda-\V{\eta}^T\sigma\cE
       (Z+\sigma\cE)^T
       \mathcal Q(\sigma,\lambda)^{-1}\V{\eta}
\end{equation}
is analytic near \((0,0)\), because \(\mathcal Q(0,0)=I_m\).  It satisfies
\(F(0,0)=0\) and \(\partial_\lambda F(0,0)=1\), since the second term has
an explicit factor \(\sigma\).  The analytic implicit-function theorem
therefore gives the unique local analytic root \(\lambda(\sigma)\) with
\(\lambda(0)=0\).  The preceding equivalence identifies it with the desired
eigenvalue branch.
\end{proof}

\section{Infinite-order recursion}

\subsection{Simple-nullity recursion}

Continue to assume \(q_0=1\), and expand
\begin{align}
    \widehat{\V{\eta}}(\sigma)
      &=\sum_{k=0}^{\infty}\sigma^k\V{\eta}_k,
      &\V{\eta}_0&=\V{\eta},
      &\V{\eta}_{\ell}&=\V{0}\quad(\ell<0),
      \label{eq:eigenvector-series}\\
    \lambda(\sigma)
      &=\sum_{k=0}^{\infty}\sigma^k\lambda_k,
      &\lambda_0&=0.
      \label{eq:eigenvalue-series}
\end{align}
Equivalently, these are the Taylor coefficients
\begin{equation}
    \V{\eta}_k
      =\frac{1}{k!}\frac{d^k\widehat{\V{\eta}}}{d\sigma^k}(0),
    \qquad
    \lambda_k
      =\frac{1}{k!}\frac{d^k\lambda}{d\sigma^k}(0).
    \label{eq:Taylor-coefficient-definition}
\end{equation}

The next result is not an independent construction: it is the coefficientwise
Taylor expansion of the exact closed compact system in
\Cref{prop:compact}.

\begin{prop}[Taylor expansion of the compact solution]
\label{prop:scalar-recursion}
For \(k\geq1\), write
\begin{equation}
    c_k:=\V{\eta}^T\V{\eta}_k,
    \qquad
    \V{\eta}_k^\perp
      :=(I_m-\V{\eta}\V{\eta}^T)\V{\eta}_k.
    \label{eq:coefficient-components}
\end{equation}
With empty sums interpreted as zero, the Taylor coefficients are determined
sequentially, for every \(k\geq1\), by
\begin{align}
    \lambda_k
      &=\V{\eta}^T A_1\V{\eta}_{k-1}
        +\V{\eta}^T A_2\V{\eta}_{k-2}
        -\sum_{j=1}^{k-1}\lambda_j
             \V{\eta}^T\V{\eta}_{k-j},
      \label{eq:lambda-recursion}\\
    \V{\eta}_k^{\perp}
      &=G\left(
          -A_1\V{\eta}_{k-1}
          -A_2\V{\eta}_{k-2}
          +\sum_{j=1}^{k-1}\lambda_j\V{\eta}_{k-j}
        \right),
      \label{eq:perpendicular-recursion}\\
    c_k
      &=-\frac12\sum_{j=1}^{k-1}
          \V{\eta}_j^T\V{\eta}_{k-j},
      \label{eq:parallel-recursion}\\
    \V{\eta}_k
      &=\V{\eta}_k^{\perp}+c_k\V{\eta}.
      \label{eq:combine-recursion}
\end{align}
The order-\(K\) Taylor approximation to the error is
\begin{equation}
    \V{\epsilon}^{[K]}(\sigma)
       =\sum_{k=1}^{K}\sigma^k\V{\eta}_k.
    \label{eq:order-K-solution}
\end{equation}
\end{prop}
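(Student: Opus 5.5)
We will prove the recursion by substituting the power series into the exact eigenproblem and the normalization, using \Cref{prop:compact} only for analyticity. By the last paragraph of \Cref{prop:compact}, the branch \(\lambda(\sigma)\) with \(\lambda(0)=0\) is analytic near \(\sigma=0\). So is \(\widehat{\V{\eta}}(\sigma)=\V{w}/\|\V{w}\|_2\), since \(\mathcal Q(\sigma,\lambda(\sigma))\) is analytic and invertible near \(\sigma=0\) and \(\V{w}(0)=\V{\eta}\ne\V{0}\). Hence the Taylor series \eqref{eq:eigenvector-series}--\eqref{eq:eigenvalue-series} converge near \(0\), and coefficients may be matched order by order in three identities.

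The three identities are:
\[
(A_0+\sigma A_1+\sigma^2A_2)\widehat{\V{\eta}}=\lambda\widehat{\V{\eta}},
\qquad
\widehat{\V{\eta}}^T\widehat{\V{\eta}}=1,
\qquad
\V{\eta}^T\widehat{\V{\eta}}>0 .
\]
At order \(\sigma^k\) with \(k\ge1\), the eigen-equation gives
\[
A_0\V{\eta}_k+A_1\V{\eta}_{k-1}+A_2\V{\eta}_{k-2}=\sum_{j=0}^{k}\lambda_j\V{\eta}_{k-j}.
\]
Since \(\lambda_0=0\), the right-hand side equals \(\lambda_k\V{\eta}+\sum_{j=1}^{k-1}\lambda_j\V{\eta}_{k-j}\).

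\emph{Eigenvalue coefficient.} Left-multiply by \(\V{\eta}^T\). Because \(\V{\eta}^TA_0=\V{0}^T\) and \(\V{\eta}^T\V{\eta}=1\), solving for \(\lambda_k\) gives \eqref{eq:lambda-recursion} immediately.

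\emph{Perpendicular component.} Apply \(G\) and use \eqref{eq:compact-pseudoinverse-identities}: \(GA_0=I_m-\V{\eta}\V{\eta}^T\) and \(G\V{\eta}=\V{0}\). The term \(\lambda_k\V{\eta}\) drops out, and what remains is \eqref{eq:perpendicular-recursion}.

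\emph{Parallel component.} Take the \(\sigma^k\) coefficient of the normalization:
\[
2\V{\eta}^T\V{\eta}_k+\sum_{j=1}^{k-1}\V{\eta}_j^T\V{\eta}_{k-j}=0 .
\]
This is \eqref{eq:parallel-recursion}. Then \eqref{eq:combine-recursion} is just the orthogonal decomposition of \(\V{\eta}_k\).

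The point requiring most care is that these equations determine the coefficients uniquely and sequentially, and that the orientation sign is consistent with \(\V{\eta}_0=\V{\eta}\).

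\emph{Sequential determination.} The right-hand sides at order \(k\) involve only \(\V{\eta}_0,\dots,\V{\eta}_{k-1}\) and \(\lambda_1,\dots,\lambda_{k-1}\). In \eqref{eq:lambda-recursion} the \(j\) sum stops at \(k-1\). So induction on \(k\) computes \(\lambda_k\) first and then \(\V{\eta}_k\).

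\emph{Uniqueness and orientation.} Conversely, the eigen-equation at order \(k\) is equivalent to its \(\V{\eta}\)-component together with its \(G\)-image. This holds because \(G\) is injective on \(\operatorname{Range}(Z)=\V{\eta}^\perp\), and that step should be stated explicitly. The orientation \(\beta>0\) forces \(\widehat{\V{\eta}}(0)=+\V{\eta}\) rather than \(-\V{\eta}\), which fixes \(\V{\eta}_0\). After that, higher orders carry no further sign ambiguity.

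\emph{Truncation.} Finally, \eqref{eq:order-K-solution} follows from \(\V{\epsilon}=\widehat{\V{\eta}}-\V{\eta}=\sum_{k\ge1}\sigma^k\V{\eta}_k\) by truncating the convergent series at order \(K\).
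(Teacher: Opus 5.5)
Your proof is correct and is essentially the paper's argument. The $\V{\eta}^T$-projection and the $G$-image of the order-$k$ eigen-equation are exactly the coefficient equations of the scalar closure $\lambda\beta=\V{\eta}^TE(Z+E)^T\widehat{\V{\eta}}$ and of $\mathcal Q\widehat{\V{\eta}}=\beta\V{\eta}$, because $G(A-\lambda I_m)=\mathcal Q-\V{\eta}\V{\eta}^T$; the paper expands those two equations directly, and your normalization step is identical to its own.
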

\begin{proof}
The local analytic root and its oriented eigenvector are supplied by
\Cref{prop:compact}.  Define
\begin{equation}
 \beta(\sigma)
   =\V{\eta}^T\widehat{\V{\eta}}(\sigma)
   =\sum_{k=0}^{\infty}\sigma^kc_k,
 \qquad
 c_0=1,
 \qquad
 c_k=\V{\eta}^T\V{\eta}_k\quad(k\geq1).
 \label{eq:beta-series}
\end{equation}
Since \(E(\sigma)=\sigma\cE\), the definitions of \(K,L,M\) and
\eqref{eq:coefficient-matrices} give
\begin{equation}
 K+L=\sigma GA_1,
 \qquad
 M=\sigma^2GA_2,
 \qquad
 \mathcal Q
  =I_m+\sigma GA_1+\sigma^2GA_2-\lambda(\sigma)G.
 \label{eq:Q-scaled-series-form}
\end{equation}
Moreover, \(\V{\eta}^TZ=\V{0}^T\), and hence
\begin{equation}
 \V{\eta}^TE(Z+E)^T
 =\V{\eta}^T(\sigma A_1+\sigma^2A_2).
\end{equation}
Therefore, the scalar equation in the closed compact system
\eqref{eq:closed-compact-system} becomes
\begin{equation}
 \lambda(\sigma)\beta(\sigma)
 =\V{\eta}^T(\sigma A_1+\sigma^2A_2)
       \widehat{\V{\eta}}(\sigma).
 \label{eq:compact-scalar-generating-equation}
\end{equation}

We first expand this scalar identity.  Its left-hand side is the Cauchy
product
\begin{align*}
 \lambda(\sigma)\beta(\sigma)
 &=\left(\sum_{j=0}^{\infty}\sigma^j\lambda_j\right)
   \left(\sum_{\ell=0}^{\infty}\sigma^\ell c_\ell\right)\\
 &=\sum_{k=0}^{\infty}\sigma^k
      \sum_{j=0}^{k}\lambda_jc_{k-j},
\end{align*}
because the exponents contributing to order \(k\) satisfy
\(j+\ell=k\).  Its right-hand side is
\begin{equation*}
 \sum_{k=0}^{\infty}\sigma^k
 \left(
   \V{\eta}^TA_1\V{\eta}_{k-1}
  +\V{\eta}^TA_2\V{\eta}_{k-2}
 \right),
\end{equation*}
where \(\V{\eta}_\ell=\V{0}\) for \(\ell<0\).  Equality of Taylor
coefficients, together with \(\lambda_0=0\) and \(c_0=1\), gives
\begin{equation}
 \lambda_k+\sum_{j=1}^{k-1}\lambda_jc_{k-j}
 =\V{\eta}^TA_1\V{\eta}_{k-1}
  +\V{\eta}^TA_2\V{\eta}_{k-2},
 \qquad k\geq1.
 \label{eq:compact-lambda-coefficient}
\end{equation}
Substituting \(c_{k-j}=\V{\eta}^T\V{\eta}_{k-j}\) and isolating
\(\lambda_k\) proves \eqref{eq:lambda-recursion}.  Thus the eigenvalue
recursion is the Taylor expansion of the scalar closure in
\Cref{prop:compact}.

Next expand the vector equation
\(\mathcal Q\widehat{\V{\eta}}=\beta\V{\eta}\) from
\eqref{eq:closed-compact-system}.  By
\eqref{eq:Q-scaled-series-form},
\begin{align*}
 \mathcal Q\widehat{\V{\eta}}
 &=\left(
   I_m+\sigma GA_1+\sigma^2GA_2
   -\left(\sum_{j=0}^{\infty}\sigma^j\lambda_j\right)G
   \right)
   \left(\sum_{\ell=0}^{\infty}
                   \sigma^\ell\V{\eta}_\ell\right)\\
 &=\sum_{k=0}^{\infty}\sigma^k\left(
   \V{\eta}_k+GA_1\V{\eta}_{k-1}+GA_2\V{\eta}_{k-2}
   -\sum_{j=0}^{k}\lambda_jG\V{\eta}_{k-j}
   \right).
\end{align*}
As in the scalar Cauchy product, the inner sum collects precisely the pairs
of exponents whose sum is \(k\).  On the other side,
\begin{equation*}
 \beta\V{\eta}
 =\sum_{k=0}^{\infty}\sigma^kc_k\V{\eta}.
\end{equation*}
Matching the coefficient of \(\sigma^k\) gives
\begin{equation}
 \V{\eta}_k+GA_1\V{\eta}_{k-1}+GA_2\V{\eta}_{k-2}
 -\sum_{j=0}^{k}\lambda_jG\V{\eta}_{k-j}
 =c_k\V{\eta}.
 \label{eq:compact-vector-coefficient}
\end{equation}
The \(j=0\) term vanishes because \(\lambda_0=0\), and the \(j=k\) term
vanishes because \(G\V{\eta}_0=G\V{\eta}=\V{0}\).  Rearranging yields
\begin{equation}
 \V{\eta}_k-c_k\V{\eta}
 =G\left(
   -A_1\V{\eta}_{k-1}-A_2\V{\eta}_{k-2}
   +\sum_{j=1}^{k-1}\lambda_j\V{\eta}_{k-j}
 \right).
 \label{eq:compact-perpendicular-coefficient}
\end{equation}
By \eqref{eq:beta-series}, the left-hand side is
\((I_m-\V{\eta}\V{\eta}^T)\V{\eta}_k=\V{\eta}_k^\perp\).  This proves
\eqref{eq:perpendicular-recursion}: it is the Taylor expansion of
the compact vector equation in \Cref{prop:compact}.  This determines only the
perpendicular component. Normalization determines the component parallel to
\(\V{\eta}\).  Expanding the
normalization identity by another Cauchy product gives
\begin{align*}
 1
 &=\widehat{\V{\eta}}(\sigma)^T
   \widehat{\V{\eta}}(\sigma)\\
 &=\left(\sum_{r=0}^{\infty}\sigma^r\V{\eta}_r\right)^T
   \left(\sum_{\ell=0}^{\infty}
                \sigma^\ell\V{\eta}_\ell\right)\\
 &=\sum_{r=0}^{\infty}\sum_{\ell=0}^{\infty}
      \sigma^{r+\ell}\V{\eta}_r^T\V{\eta}_\ell\\
 &=\sum_{k=0}^{\infty}\sigma^k
      \sum_{j=0}^{k}\V{\eta}_j^T\V{\eta}_{k-j}.
\end{align*}
The coefficient at order zero is
\(\V{\eta}_0^T\V{\eta}_0=\|\V{\eta}\|_2^2=1\).  Since the right-hand
side of the normalization identity is the constant series \(1\), every
coefficient of order \(k\geq1\) must vanish.  Hence
\begin{equation}
 0=\sum_{j=0}^{k}\V{\eta}_j^T\V{\eta}_{k-j},
 \qquad k\geq1.
 \label{eq:normalization-coefficient}
\end{equation}
The endpoint terms \(j=0\) and \(j=k\) are
\begin{equation*}
 \V{\eta}_0^T\V{\eta}_k+
 \V{\eta}_k^T\V{\eta}_0
 =2\V{\eta}^T\V{\eta}_k,
\end{equation*}
where \(\V{\eta}_0=\V{\eta}\) and the two real scalar products are equal.
Separating these endpoint terms gives
\begin{equation}
 0=2\V{\eta}^T\V{\eta}_k
   +\sum_{j=1}^{k-1}\V{\eta}_j^T\V{\eta}_{k-j}.
 \label{eq:normalization-split}
\end{equation}
Defining \(c_k=\V{\eta}^T\V{\eta}_k\) and solving
\eqref{eq:normalization-split} for this scalar gives
\begin{equation*}
 c_k=-\frac12\sum_{j=1}^{k-1}
          \V{\eta}_j^T\V{\eta}_{k-j},
\end{equation*}
which is \eqref{eq:parallel-recursion}.  Both indices in each summand lie
between \(1\) and \(k-1\), so only previously computed coefficients occur.
In particular, the empty sum at \(k=1\) gives \(c_1=0\), while
\(c_2=-\frac12\|\V{\eta}_1\|_2^2\).

Finally, since \(\|\V{\eta}\|_2=1\), the complementary orthogonal
projectors are \(I_m-\V{\eta}\V{\eta}^T\) and
\(\V{\eta}\V{\eta}^T\).  Therefore
\begin{align}
 \V{\eta}_k
 &=(I_m-\V{\eta}\V{\eta}^T)\V{\eta}_k
   +\V{\eta}\V{\eta}^T\V{\eta}_k \notag\\
 &=\V{\eta}_k^\perp+c_k\V{\eta}.
 \label{eq:eta-k-decomposition}
\end{align}
The first term is the perpendicular component already determined by
\eqref{eq:perpendicular-recursion}; hence this is precisely
\eqref{eq:combine-recursion}.  The formulas are sequential: first compute
\(\lambda_k\), then \(\V{\eta}_k^\perp\), then \(c_k\), and finally combine
the two components to obtain \(\V{\eta}_k\).  Subtracting
\(\V{\eta}_0=\V{\eta}\) from the truncated eigenvector series gives
\eqref{eq:order-K-solution}.  Hence
\eqref{eq:lambda-recursion}--\eqref{eq:combine-recursion} are the Taylor
coefficient equations of the exact compact system in \Cref{prop:compact},
whereas \eqref{eq:order-K-solution} is their order-\(K\) truncation.
\end{proof}

The first coefficient is
\begin{equation}
    \lambda_1=0,
    \qquad
    \V{\eta}_1=-GA_1\V{\eta}=-GZ\cE^T\V{\eta}.
\end{equation}
Here \(\lambda_1=\V{\eta}^TA_1\V{\eta}=0\), because
\(Z^T\V{\eta}=\V{0}\) (and hence \(\V{\eta}^TZ=\V{0}^T\)).
Therefore
\begin{equation}
    \V{\epsilon}(\sigma)
      =-\sigma GZ\cE^T\V{\eta}+O(\sigma^2)
      =-(Z^T)^+E^T\V{\eta}+O(\sigma^2),
    \label{eq:first-order-solution}
\end{equation}
which is the first-order least-squares expression.
At second order,
\begin{align}
    \lambda_2
      &=\V{\eta}^TA_1\V{\eta}_1+
        \V{\eta}^TA_2\V{\eta},\\
    \V{\eta}_2
      &=-G(A_1\V{\eta}_1+A_2\V{\eta})
        -\frac12\|\V{\eta}_1\|_2^2\V{\eta}.
\end{align}
The final term is the normalization correction.

\subsection{Block recursion for multiple nullity}

When \(q_0>1\), the clean matrix determines a null subspace but not
preferred individual null vectors.  Therefore, the scalar recursion cannot
be applied independently to arbitrary columns.  Recall that
\(Q_0\in\R^{m\times q_0}\) is the fixed clean orthonormal frame introduced in
the problem setting, with
\(\operatorname{range}(Q_0)=\operatorname{Null}(Z^T)\).

Choose a local analytic orthonormal frame
\(V(\sigma)\in\R^{m\times q_0}\) whose columns span the noisy invariant
subspace descending from this clean null space and whose initial value is
\(V(0)=Q_0\).  The matrix
\begin{equation}
 \Lambda(\sigma)
   :=V(\sigma)^TA(\sigma)V(\sigma)\in\R^{q_0\times q_0}
\end{equation}
is the symmetric matrix representing the restriction of \(A(\sigma)\) to
that moving frame; it need not be diagonal in the chosen gauge.  Expand both
matrix-valued functions as
\begin{align}
    V(\sigma)&=\sum_{k=0}^{\infty}\sigma^kV_k,
    &V_0&=Q_0,
    &V_{\ell}&=0_{m\times q_0}\quad(\ell<0),\\
    \Lambda(\sigma)&=\sum_{k=0}^{\infty}\sigma^k\Lambda_k,
    &\Lambda_0&=0_{q_0\times q_0},
\end{align}
where every \(V_k\) is \(m\times q_0\) and every \(\Lambda_k\) is
\(q_0\times q_0\).  These definitions give
\begin{equation}
    A(\sigma)V(\sigma)=V(\sigma)\Lambda(\sigma),
    \qquad V(\sigma)^TV(\sigma)=I_{q_0}.
\end{equation}

\begin{prop}[All-order block recursion]
\label{prop:block-recursion}
For \(k\geq1\), let
\begin{align}
    \Lambda_k
      &=Q_0^TA_1V_{k-1}+Q_0^TA_2V_{k-2}
        -\sum_{j=1}^{k-1}(Q_0^TV_{k-j})\Lambda_j,
      \label{eq:block-lambda}\\
    V_k^{\perp}
      &=G\left(
          -A_1V_{k-1}-A_2V_{k-2}
          +\sum_{j=1}^{k-1}V_{k-j}\Lambda_j
        \right),
      \label{eq:block-perp}\\
    \Gamma_k
      &=-\frac12\sum_{j=1}^{k-1}V_j^TV_{k-j},
      \qquad
    V_k=V_k^{\perp}+Q_0\Gamma_k.
    \label{eq:block-parallel}
\end{align}
Then \(V^{[K]}(\sigma)=\sum_{k=0}^K\sigma^kV_k\) satisfies the invariant
subspace equation and orthonormality through order \(K\).  The sum defining
\(\Gamma_k\) is symmetric: transposition followed by the reindexing
\(j\leftrightarrow k-j\) leaves it unchanged.  Thus
\(Q_0^TV_k=\Gamma_k\) is symmetric, or equivalently
\(\operatorname{skew}(Q_0^TV_k)=0\); this choice fixes the otherwise
arbitrary internal rotation gauge.
\end{prop}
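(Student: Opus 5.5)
I would prove the block statement in the same way \Cref{prop:scalar-recursion} was proved. Expand the two defining identities \(A(\sigma)V(\sigma)=V(\sigma)\Lambda(\sigma)\) and \(V(\sigma)^TV(\sigma)=I_{q_0}\) as Cauchy products. Match the coefficients of \(\sigma^k\). Then split each coefficient equation into its component along \(\operatorname{range}(Q_0)\) and its component along the orthogonal complement \(\operatorname{Range}(Z)\). The needed identities are the block analogues of \eqref{eq:compact-pseudoinverse-identities}: \(GZZ^T=I_m-Q_0Q_0^T\), \(GQ_0=0\), and \(Q_0^TZ=0\). These hold because \(\operatorname{Null}(ZZ^T)=\operatorname{range}(Q_0)\).

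\textbf{Step 1: invariant-subspace equation.} With \(A=A_0+\sigma A_1+\sigma^2A_2\) and \(\Lambda_0=0\), the \(\sigma^k\) coefficient reads
\begin{equation*}
A_0V_k+A_1V_{k-1}+A_2V_{k-2}=\sum_{j=0}^{k}V_{k-j}\Lambda_j=\sum_{j=1}^{k}V_{k-j}\Lambda_j .
\end{equation*}

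\textbf{Step 2: projection onto \(\operatorname{range}(Q_0)\).} Left-multiply by \(Q_0^T\). The term \(Q_0^TA_0\) vanishes, and the \(j=k\) term gives \(Q_0^TV_0\Lambda_k=\Lambda_k\). Solving for \(\Lambda_k\) yields \eqref{eq:block-lambda}.

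\textbf{Step 3: projection onto the complement.} Left-multiply by \(G\). Since \(GA_0=I_m-Q_0Q_0^T\) and \(GV_0=GQ_0=0\), the \(j=k\) term drops out. What remains is exactly \((I_m-Q_0Q_0^T)V_k=V_k^\perp\) as given in \eqref{eq:block-perp}. The two projections together recover the full coefficient equation: \(Q_0Q_0^T+(I_m-Q_0Q_0^T)=I_m\), and applying \(G\) is injective on \(\operatorname{Range}(Z)\) because there \(A_0G=GA_0\) is the projector. So if Steps 2 and 3 hold, the full order-\(k\) equation holds.

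\textbf{Step 4: orthonormality.} The \(\sigma^k\) coefficient of \(V^TV\) is \(\sum_{j=0}^kV_j^TV_{k-j}=0\) for \(k\geq1\). Its endpoint terms are \(Q_0^TV_k+V_k^TQ_0\). Unlike the scalar case, this determines only the symmetric part of \(Q_0^TV_k\):
\begin{equation*}
\operatorname{sym}(Q_0^TV_k)=-\tfrac12\sum_{j=1}^{k-1}V_j^TV_{k-j}.
\end{equation*}
The skew part is free, reflecting the \(O(q_0)\) rotation freedom of the frame. Choosing it to be zero gives \(Q_0^TV_k=\Gamma_k\), which is \eqref{eq:block-parallel}.

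For consistency I still need \(\Gamma_k\) to be symmetric. Transposing gives \(\sum_j V_{k-j}^TV_j\), and the substitution \(j\mapsto k-j\) maps \(\{1,\dots,k-1\}\) to itself, so the sum is unchanged. Since \(Q_0^TV_k^\perp=0\), it follows that \(Q_0^TV_k=\Gamma_k\). Each \(V_k\) depends only on \(V_0,\dots,V_{k-1}\) and \(\Lambda_1,\dots,\Lambda_k\), so the recursion is sequential. Induction on \(k\) then shows that the truncation \(V^{[K]}\) satisfies both identities through order \(K\).

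\textbf{Main obstacle: consistency of the split.} The projection argument presupposes that the coefficient equation actually has a solution. This holds because the range-component equation \(A_0V_k^\perp=\cdots\) is solvable exactly when the right-hand side lies in \(\operatorname{Range}(Z)\). That is precisely what the choice of \(\Lambda_k\) in Step 2 enforces, so Steps 2 and 3 together give genuine solvability, not just necessary conditions.

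A second point needs care. The statement asserts only order-\(K\) consistency, and the recursion fixes a particular gauge with \(Q_0^TV_k\) symmetric. To link the result to an actual analytic frame, I would argue as follows. An analytic frame of the invariant subspace exists by Kato's analytic perturbation theory, since the \(q_0\)-group stays separated from the rest of the spectrum near \(\sigma=0\). Multiplying that frame by an analytic orthogonal \(q_0\times q_0\) matrix produces the symmetric gauge. The polar factor of \(Q_0^TV(\sigma)\), which is invertible near \(0\), supplies this matrix. The recursion then gives its Taylor coefficients uniquely.
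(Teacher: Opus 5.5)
Your proposal is correct and follows the paper's route, which proves this proposition as the matrix version of \Cref{prop:scalar-recursion}. That route expands \(AV=V\Lambda\) and \(V^TV=I_{q_0}\) by Cauchy products, isolates \(\Lambda_k\) with \(Q_0^T\) and \(V_k^\perp\) with \(G\), and lets orthonormality fix only \(\operatorname{sym}(Q_0^TV_k)\); your additional remarks on solvability of the range equation and on realizing the symmetric gauge via the polar factor of \(Q_0^TV(\sigma)\) correctly supply details the paper leaves implicit.
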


The proof is the matrix version of the proof of
\Cref{prop:scalar-recursion}.  In particular,
\begin{equation}
    \Lambda_1=0,
    \qquad
    V_1=-GA_1Q_0.
\end{equation}
The first nonzero splitting inside the clean null space occurs at second
order:
\begin{equation}
    \boxed{
    \Lambda_2=W
      =Q_0^T\bigl(A_2-A_1GA_1\bigr)Q_0.}
    \label{eq:effective-splitting}
\end{equation}

The same recursion has a basis-free projector form.  Suppressing the
full-nullity subscript \(q_0\), write
\(\widehat P(\sigma)=V(\sigma)V(\sigma)^T
  =\sum_{k\geq0}\sigma^kP_k\), with
\(P_0=Q_0Q_0^T\), \(P_{-1}=0\), and
\(P_0^\perp=I_m-P_0\).  For \(k\geq1\), set
\begin{equation}
    S_k=\sum_{j=1}^{k-1}P_jP_{k-j},
    \qquad
    C_k=[A_1,P_{k-1}]+[A_2,P_{k-2}],
\end{equation}
where \([B,C]=BC-CB\).  The identities
\(\widehat P(\sigma)^2=\widehat P(\sigma)\) and
\([A(\sigma),\widehat P(\sigma)]=0\) give
\begin{equation}
\boxed{
    P_k=-P_0S_kP_0+P_0^\perp S_kP_0^\perp
        -GC_kP_0+P_0C_kG.}
    \label{eq:projector-recursion}
\end{equation}
In particular, \(P_1=-GA_1P_0-P_0A_1G\).  This coordinate-free series is
the natural object to compare with the exact SVD projector.

\section{Finite-sample empirical risk}

For a fixed finite training realization \(\mathcal{T}=(X,\cE)\), define the
empirical residual risk of the retained \(q\)-dimensional estimator by
\begin{equation}
\boxed{
    R_{\mathrm{emp},q}(\sigma;\mathcal{T})
      =\frac{1}{\tau}
        \|\widetilde Z(\sigma)^T\widehat Q_q(\sigma)\|_F^2
      =\frac{1}{\tau}\operatorname{tr}\!\left(
         A(\sigma)\widehat P_q(\sigma)\right).}
    \label{eq:empirical-risk}
\end{equation}
The expected empirical error used in the portfolio template is the outer
training expectation
\begin{equation}
    \varepsilon_{\mathrm{emp},q}(\sigma)
       =\mathbb E_{\mathrm{tr}}
          [R_{\mathrm{emp},q}(\sigma;\mathcal{T})],
    \qquad \mathbb E_{\mathrm{tr}}=\mathbb E_{X,\cE}.
    \label{eq:expected-empirical-risk}
\end{equation}
For simple nullity,
\(R_{\mathrm{emp},1}(\sigma;\mathcal{T})=\lambda(\sigma)/\tau\), so the
compact reconstruction gives the exact empirical risk once \(\lambda\) is
supplied.

For complex \(s\) near zero, let \(P_q(s)\) denote the analytic continuation
of the bottom-\(q\) projector, chosen so that
\begin{equation}
    P_q(\sigma)=\widehat P_q(\sigma),
    \qquad
    P_q(s)=\sum_{k\geq0}s^kP_{q,k}
    \quad\text{for real \(\sigma\) sufficiently close to zero},
    \label{eq:analytic-projector-series}
\end{equation}
and set \(P_{q,k}=0\) for \(k<0\).  Since
\(A(s)=A_0+sA_1+s^2A_2\), the empirical-risk series is
\begin{equation}
    R_{\mathrm{emp},q}(s;\mathcal{T})
      =\sum_{k\geq0}e_{q,k}s^k,
    \qquad
\boxed{
    e_{q,k}=\frac{1}{\tau}\operatorname{tr}\!\left(
      A_0P_{q,k}+A_1P_{q,k-1}+A_2P_{q,k-2}\right).}
    \label{eq:empirical-risk-series}
\end{equation}
This is a fixed-realization expansion.  Passing the series through
\(\mathbb E_{\mathrm{tr}}\) requires a separate domination or uniform-
convergence argument.  When such an interchange is justified,
\begin{equation}
    \varepsilon_{\mathrm{emp},q}(\sigma)
      =\sum_{k\geq0}\bar e_{q,k}\sigma^k,
    \qquad
    \bar e_{q,k}=\mathbb E_{\mathrm{tr}}[e_{q,k}].
    \label{eq:expected-empirical-series}
\end{equation}

\begin{prop}[Second-order origin of the empirical ranking]
\label{prop:empirical-ranking}
Let
\(\lambda_{(0)}(\sigma)\leq\cdots\leq
  \lambda_{(m-1)}(\sigma)\)
be the increasingly ordered eigenvalues of \(A(\sigma)\), and let
\(P_{(i)}(\sigma)\) be rank-one eigenprojectors from an ordered
orthonormal eigenbasis.  When an eigenvalue is simple its projector is unique;
the associated unit eigenvector is unique up to sign.  For \(0\leq i<q_0\),
define
\begin{equation}
 R_{\mathrm{emp},i}^{\mathrm{ord}}(\sigma;\mathcal T)
   =\frac1\tau\operatorname{tr}\!\left(
      A(\sigma)P_{(i)}(\sigma)\right),
 \qquad
 \varepsilon_{\mathrm{emp},i}^{\mathrm{ord}}(\sigma)
   =\mathbb E_{\mathrm{tr}}
      [R_{\mathrm{emp},i}^{\mathrm{ord}}(\sigma;\mathcal T)].
 \label{eq:ordered-empirical-risk}
\end{equation}
Then, exactly for every realization,
\begin{equation}
 \boxed{R_{\mathrm{emp},i}^{\mathrm{ord}}(\sigma;\mathcal T)
       =\frac{\lambda_{(i)}(\sigma)}{\tau}},
 \qquad
 R_{\mathrm{emp},0}^{\mathrm{ord}}\leq\cdots\leq
 R_{\mathrm{emp},q_0-1}^{\mathrm{ord}}.
 \label{eq:exact-empirical-order}
\end{equation}
For every \(1\leq q\leq q_0\), these branch risks decompose the aggregate
risk, both before and after expectation:
\begin{equation}
 R_{\mathrm{emp},q}(\sigma;\mathcal T)
   =\sum_{i=0}^{q-1}R_{\mathrm{emp},i}^{\mathrm{ord}}(\sigma;\mathcal T),
 \qquad
 \varepsilon_{\mathrm{emp},q}(\sigma)
   =\sum_{i=0}^{q-1}\varepsilon_{\mathrm{emp},i}^{\mathrm{ord}}(\sigma).
 \label{eq:ordered-empirical-risk-sum}
\end{equation}
If \(\mu_0\leq\cdots\leq\mu_{q_0-1}\) are the ordered eigenvalues of
the effective matrix \(W\) in \eqref{eq:effective-splitting}, then
\begin{equation}
 \boxed{
 R_{\mathrm{emp},i}^{\mathrm{ord}}(\sigma;\mathcal T)
   =\frac{\sigma^2}{\tau}\mu_i+O_{\mathcal T}(\sigma^3),
   \qquad \sigma\downarrow0.}
 \label{eq:second-order-empirical-ranking}
\end{equation}
Thus \(W\), rather than an arbitrary clean null-space basis, governs the
first nonzero small-noise separations.  If \(W\) is simple, its locally
analytic labelled sheets coincide with \(\lambda_{(i)}\) for all sufficiently
small \(\sigma>0\).  At larger noise, an eigenvalue crossing may exchange
sheet labels; the parenthesized index always denotes instantaneous SVD order.

Under the Gaussian assumptions of \Cref{sec:equal-ranked-equal}, suppose
also that \(\tau\geq m\).  Then
\begin{equation}
 W\sim\operatorname{Wishart}_{q_0}(\tau-n,I_{q_0}),
 \qquad
 \varepsilon_{\mathrm{emp},i}^{\mathrm{ord}}(\sigma)
   =\frac{\sigma^2}{\tau}\,\overline\mu_i+o(\sigma^2),
 \quad \overline\mu_i=\mathbb E[\mu_i],
 \label{eq:expected-second-order-empirical-ranking}
\end{equation}
and
\(\overline\mu_0<\cdots<\overline\mu_{q_0-1}\).
Let \(\overline{\V{\mu}}=(\overline\mu_0,\ldots,
\overline\mu_{q_0-1})^T\).  Consequently, if
\(\V{e}_{\mathrm{emp}}\) collects the expected ordered empirical risks, then
\begin{equation}
 \frac{\varepsilon_{\mathrm{emp},i}^{\mathrm{ord}}(\sigma)}
      {\|\V{e}_{\mathrm{emp}}(\sigma)\|_2}
 \longrightarrow
 \frac{\overline\mu_i}{\|\overline{\V{\mu}}\|_2}
 \quad(\sigma\downarrow0),
 \label{eq:normalized-empirical-small-limit}
\end{equation}
which, when \(q_0>1\), is strictly ranked rather than uniform.  Moreover, for every
\(\sigma>0\) the Gaussian sample covariance has simple spectrum almost
surely, so the expected inequalities in \eqref{eq:exact-empirical-order}
are strict.
\end{prop}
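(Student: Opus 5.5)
The plan is to treat the deterministic, realization-wise claims first, then the small-noise asymptotics, and finally the Gaussian statements.

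\textbf{Exact ordered identities.} For the exact identity \eqref{eq:exact-empirical-order}, write \(P_{(i)}=\V{u}_i\V{u}_i^T\) with \(A\V{u}_i=\lambda_{(i)}\V{u}_i\). Then \(\operatorname{tr}(AP_{(i)})=\V{u}_i^TA\V{u}_i=\lambda_{(i)}\), and the monotonicity is just the ordering of the eigenvalues.

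For \eqref{eq:ordered-empirical-risk-sum}, note that \(\widehat P_q=\sum_{i<q}P_{(i)}\) for any choice of ordered eigenbasis. When there are ties at the cut this projector is not unique, but the trace \(\operatorname{tr}(A\widehat P_q)\) equals \(\sum_{i<q}\lambda_{(i)}\) for every minimizer of \eqref{eq:subspace-estimator}, by Ky Fan. Linearity of \(\mathbb E_{\mathrm{tr}}\) then gives the expected version, provided each term is integrable; this follows from the bound below.

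\textbf{Second-order expansion.} For \eqref{eq:second-order-empirical-ranking}, I would use the block recursion of \Cref{prop:block-recursion}. The bottom \(q_0\) eigenvalues of \(A(\sigma)\) are, for small \(\sigma\), exactly the eigenvalues of \(\Lambda(\sigma)=V^TAV\), since \(V\) spans the invariant subspace descending from the clean null space. We have
\begin{equation*}
\Lambda(\sigma)=\sigma^2W+O(\sigma^3),
\end{equation*}
because \(\Lambda_0=\Lambda_1=0\) and \(\Lambda_2=W\). Weyl's inequality then gives \(\lambda_{(i)}(\sigma)=\sigma^2\mu_i+O(\sigma^3)\). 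If \(W\) is simple, analytic perturbation of the simple eigenvalues of \(\Lambda(\sigma)/\sigma^2\) identifies the labelled sheets with the ordered branches for small \(\sigma>0\).

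\textbf{Wishart identification.} Assuming the Gaussian model (\(X\) Gaussian with full row rank a.s., \(H\) of rank \(n\), \(\cE\) i.i.d.\ standard Gaussian and independent of \(X\)), I would first simplify \(W\). Since \(Q_0^TZ=0\), we have \(Q_0^TA_1=Q_0^T\cE Z^T\) and \(A_1Q_0=Z\cE^TQ_0\). Hence
\begin{equation*}
W=Q_0^T\cE\bigl(I_\tau-Z^TGZ\bigr)\cE^TQ_0 .
\end{equation*}
Here \(Z^TGZ=Z^T(ZZ^T)^+Z\) is the orthogonal projector onto the row space of \(Z\), which has dimension \(n\). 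So \(\Pi:=I_\tau-Z^TGZ\) is a projector of rank \(\tau-n\).

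Conditionally on \(X\), both \(Q_0\) and \(\Pi\) are fixed, and \(Q_0^T\cE\) is a \(q_0\times\tau\) matrix with i.i.d.\ \(N(0,1)\) entries, by orthonormality of the columns of \(Q_0\) and rotation invariance. It follows that \(W\mid X\sim\operatorname{Wishart}_{q_0}(\tau-n,I_{q_0})\). This law does not depend on \(X\), so it is also the unconditional law. The condition \(\tau\geq m\) is exactly \(\tau-n\geq q_0\), which makes \(W\) nonsingular with a.s.\ simple spectrum, since the joint eigenvalue density is continuous. Therefore \(\mu_i<\mu_{i+1}\) a.s. All moments are finite, so \(\overline\mu_i<\overline\mu_{i+1}\).

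\textbf{Main obstacle: exchanging limit and expectation.} This is the step I expect to be hardest, since the \(O_{\mathcal T}(\sigma^3)\) remainder is not uniform in \(\mathcal T\). I would first try Courant--Fischer with the trial subspace \(\operatorname{range}(Q_0)\), on which \(Z^T\) vanishes:
\begin{equation*}
\lambda_{(i)}(\sigma)\leq\lambda_i\!\left(Q_0^TAQ_0\right)=\sigma^2\lambda_i\!\left(Q_0^T\cE\cE^TQ_0\right)\leq\sigma^2\|\cE\|_2^2 .
\end{equation*}
Thus \(\lambda_{(i)}/\sigma^2\) is dominated by the integrable variable \(\|\cE\|_2^2\) and converges pointwise to \(\mu_i\). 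Dominated convergence then yields \eqref{eq:expected-second-order-empirical-ranking}. Dividing by \(\|\V{e}_{\mathrm{emp}}\|_2=\sigma^2\tau^{-1}(\|\overline{\V{\mu}}\|_2+o(1))\) gives \eqref{eq:normalized-empirical-small-limit}, and it is strictly ranked by the preceding step.

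\textbf{Strictness for \(\sigma>0\).} For fixed \(\sigma>0\), \(\widetilde Z\) has an absolutely continuous law on \(\R^{m\times\tau}\) given \(X\). The discriminant of the characteristic polynomial of \(\widetilde Z\widetilde Z^T\) is a nonzero polynomial in the entries; it is nonzero, for example, at a matrix with distinct singular values. Its zero set is therefore Lebesgue-null, so the spectrum is a.s.\ simple and \(\lambda_{(i)}<\lambda_{(i+1)}\) a.s. Taking expectations of a.s.\ strict inequalities between integrable variables gives the strict expected inequalities.
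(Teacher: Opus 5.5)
Your proposal is correct and follows essentially the paper's proof: the spectral identity, the residual-projector form \(W=Q_0^T\cE(I_\tau-Z^TGZ)\cE^TQ_0\) yielding the Wishart law, and dominated convergence via the min--max bound \(\lambda_{(i)}(\sigma)\leq\sigma^2\|\cE\|_2^2\) are exactly the paper's steps. Your variations are all valid and slightly more careful than the paper: Weyl's inequality on the reduced matrix \(\Lambda(\sigma)=\sigma^2W+O(\sigma^3)\) in place of the paper's range-space elimination, Ky Fan for ties at the cut, and a discriminant/Lebesgue-null argument conditional on \(X\) in place of the Wishart simple-spectrum fact.
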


\begin{proof}
The spectral identity
\(\operatorname{tr}(A P_{(i)})=\lambda_{(i)}\) proves
\eqref{eq:exact-empirical-order} and, after summing the bottom eigenvalues,
\eqref{eq:ordered-empirical-risk-sum}.  To expose the first nonzero term, let
\(Q_1\) span \(\operatorname{range}(Z)\), put
\(C=Q_1^TA_0Q_1>0\), and decompose a bottom eigenvector as
\(\V{u}=Q_0\V{\phi}+Q_1\V{y}\).  The range equation gives
\begin{equation}
 \V{y}=-\sigma C^{-1}Q_1^TA_1Q_0\V{\phi}+O(\sigma^2).
 \label{eq:empirical-range-elimination}
\end{equation}
Substitution into the null-space equation, using
\(G=Q_1C^{-1}Q_1^T\), gives
\begin{equation}
 \sigma^2Q_0^T(A_2-A_1GA_1)Q_0\V{\phi}
    =\lambda(\sigma)\V{\phi}+O(\sigma^3).
 \label{eq:empirical-second-order-eigenproblem}
\end{equation}
Hermitian degenerate perturbation therefore gives
\(\lambda_{(i)}(\sigma)/\sigma^2\to\mu_i\), with the remainder in
\eqref{eq:second-order-empirical-ranking}; if \(W\) is simple, this also
identifies the local analytic sheets.

Under the stated rank assumptions,
\(\operatorname{Null}(Z^T)=\operatorname{Null}(H^T)\), so \(Q_0\) may be
chosen deterministically given \(H\).  For Gaussian training,
\eqref{eq:W-residual-form} gives, with \(Y=Q_0^T\cE\),
\begin{equation}
 W=Y(I_\tau-\Pi_X)Y^T.
\end{equation}
The entries of \(Y\) are independent standard Gaussians, while, conditionally
on \(X\), \(I_\tau-\Pi_X\) is an orthogonal projector of rank \(\tau-n\).
Rotational invariance therefore gives the
Wishart law in \eqref{eq:expected-second-order-empirical-ranking}.  Its
spectrum is simple and positive almost surely when
\(\tau-n\geq q_0\), hence its ordered eigenvalue means are strictly
increasing: for \(i<j\), the integrable gap \(\mu_j-\mu_i\) is positive
almost surely and therefore has positive expectation.  Finally, the
min--max principle on the trial subspace
\(\operatorname{range}(Q_0)\) gives
\begin{equation}
 0\leq\lambda_{(i)}(\sigma)
 \leq\lambda_{(q_0-1)}(\sigma)
 \leq\sigma^2\|Q_0^T\cE\|_2^2
 \leq\sigma^2\|\cE\|_2^2,
 \qquad 0\leq i<q_0.
\end{equation}
The upper bound is integrable, so dominated convergence justifies the
expected limit and then \eqref{eq:normalized-empirical-small-limit}.  For
\(\sigma>0\), the columns of \(\widetilde Z\) are independent
\(\mathcal N(0,HH^T+\sigma^2I_m)\) vectors.  When \(\tau\geq m\), their
central Wishart matrix is positive definite with simple spectrum almost
surely.  Thus the pointwise inequalities are strict almost surely, and so
are their expectations.
\end{proof}

\section{Finite-sample generalization risk}

\subsection{Conditional compact and series forms}

Let a new measurement, independent of the training matrix, be
\begin{equation}
    \V{z}_{\mathrm{te}}
      =H\V{x}_{\mathrm{te}}+\sigma\V{e}_{\mathrm{te}},
    \qquad
    \mathbb E[\V{x}_{\mathrm{te}}\V{x}_{\mathrm{te}}^T]=I_n,
    \qquad
    \mathbb E[\V{e}_{\mathrm{te}}\V{e}_{\mathrm{te}}^T]=I_m,
    \label{eq:test-model}
\end{equation}
where the two test variables are independent of each other and of the
training realization.  Assume here that
\(\operatorname{rank}(X)=\operatorname{rank}(H)=n\), so the clean null
spaces of \(Z^T\) and \(H^T\) coincide.  Conditional on
\(\mathcal{T}=(X,\cE)\), the simple-nullity population risk is
\begin{equation}
    R_{\mathrm{gen}}(\sigma;\mathcal{T})
      =\mathbb E\!\left[
        (\widehat{\V{\eta}}(\sigma)^T\V{z}_{\mathrm{te}})^2
        \mathrel{|}\mathcal{T}
      \right].
\end{equation}
Writing \(B=HH^T\), using \(H^T\V{\eta}=0\), and recalling that
\(\widehat{\V{\eta}}\) is a unit vector gives the exact identity
\begin{equation}
\boxed{
    R_{\mathrm{gen}}(\sigma;\mathcal{T})
      =\widehat{\V{\eta}}(\sigma)^T
         (B+\sigma^2I_m)\widehat{\V{\eta}}(\sigma)
      =\sigma^2+\V{\epsilon}(\sigma)^TB\V{\epsilon}(\sigma).}
    \label{eq:generalization-compact-identity}
\end{equation}
Consequently, substitution of the compact vector in
\eqref{eq:compact-error} yields the compact scalar form
\begin{equation}
\boxed{
    R_{\mathrm{gen}}^{\mathrm{comp}}(\sigma;\mathcal{T})
      =\sigma^2+(\alpha\V{w}-\V{v})^T
         B(\alpha\V{w}-\V{v}).}
    \label{eq:generalization-compact}
\end{equation}
It is exact whenever the compact eigenvector formula is exact.

For the strict series, substitute
\(\V{\epsilon}(\sigma)=\sum_{k\geq1}\sigma^k\V{\eta}_k\) into
\eqref{eq:generalization-compact-identity}.  Then
\begin{equation}
    R_{\mathrm{gen}}(\sigma;\mathcal{T})
      =\sum_{\ell\geq0}g_\ell\sigma^\ell,
    \qquad g_0=g_1=0,
\end{equation}
with the closed coefficient convolution
\begin{equation}
\boxed{
    g_\ell
      =\boldsymbol{1}_{\{\ell=2\}}
       +\sum_{j=1}^{\ell-1}
          \V{\eta}_j^TB\V{\eta}_{\ell-j},
      \qquad \ell\geq2.}
    \label{eq:generalization-series}
\end{equation}
Thus an order-\(K\) risk approximation retains only
\(\sum_{\ell=0}^Kg_\ell\sigma^\ell\); products of vector coefficients whose
total degree exceeds \(K\) must not be included.  This consistent truncation
is important when the recursion is compared with the exact SVD risk.

For a retained \(q\)-dimensional subspace, the corresponding aggregate risk
has the basis-free form
\begin{equation}
\boxed{
    R_{\mathrm{gen},q}(\sigma;\mathcal{T})
       =q\sigma^2+\operatorname{tr}\!\left(
          B\widehat P_q(\sigma)\right).}
    \label{eq:generalization-projector}
\end{equation}
Using the analytic continuation in
\eqref{eq:analytic-projector-series}, its coefficients are
\begin{equation}
    g_{q,k}=q\boldsymbol{1}_{\{k=2\}}
       +\operatorname{tr}(BP_{q,k}).
    \label{eq:generalization-projector-series}
\end{equation}
Thus
\begin{equation}
    R_{\mathrm{gen},q}^{[K]}(\sigma;\mathcal{T})
      =\sum_{k=0}^{K}g_{q,k}\sigma^k,
    \qquad
    \delta_{\mathrm{gen},q}^{[K]}(\sigma)
      =\left|R_{\mathrm{gen},q}^{[K]}(\sigma;\mathcal{T})
             -R_{\mathrm{gen},q}(\sigma;\mathcal{T})\right|.
    \label{eq:generalization-series-error}
\end{equation}
The template's expected generalization error is
\begin{equation}
    \varepsilon_{\mathrm{gen},q}(\sigma)
      =\mathbb E_{\mathrm{tr}}
         [R_{\mathrm{gen},q}(\sigma;\mathcal{T})]
      =q\sigma^2+\operatorname{tr}\!\left(
         B\,\mathbb E_{\mathrm{tr}}[\widehat P_q(\sigma)]\right).
    \label{eq:expected-generalization-risk}
\end{equation}
For \(q=1\), this becomes the exact compact expectation
\begin{equation}
    \varepsilon_{\mathrm{gen}}(\sigma)
      =\sigma^2+
        \mathbb E_{\mathrm{tr}}[\V{\epsilon}(\sigma)^T
          B\V{\epsilon}(\sigma)].
    \label{eq:expected-generalization-compact}
\end{equation}
As for the empirical risk, our compact and series identities hold
conditionally for every fixed training realization.  If a training ensemble
has a common convergence disk and integrable coefficient bounds that justify
termwise expectation, then the expected-error series is
\begin{equation}
    \varepsilon_{\mathrm{gen},q}(\sigma)
       =\sum_{k\geq0}\bar g_{q,k}\sigma^k,
    \qquad
    \bar g_{q,k}=\mathbb E_{\mathrm{tr}}[g_{q,k}].
    \label{eq:expected-generalization-series}
\end{equation}
Without those conditions---in particular for an unbounded Gaussian noise
ensemble---the fixed-realization series cannot automatically be averaged
term by term.  The recursion notebooks therefore verify the conditional risk
\(R_{\mathrm{gen},q}\).  The ordered-branch experiment introduced below is
different: it estimates the outer expectation directly by Monte Carlo and
uses a local polynomial fit to estimate its low-noise coefficients.

\subsection{Expected ordered-branch generalization: the equal--ranked--equal law}
\label{sec:equal-ranked-equal}

The aggregate projector in \eqref{eq:generalization-projector} deliberately
forgets rotations within the retained subspace.  The SVD nevertheless
orders the individual vectors by their singular values.  We now ask when
that empirical ordering is also visible in expected generalization risk.

Assume in this subsection that \(H\) has full column rank, that \(X\) and
\(\cE\) have independent standard-real-Gaussian entries, and that
\(\tau>n+1\).  Suppose also that the eigenvalues of the effective splitting
matrix \(W\) in \eqref{eq:effective-splitting} are simple.  Write
\begin{equation}
    W=\Phi\operatorname{diag}(\mu_0,\ldots,\mu_{q_0-1})\Phi^T,
    \qquad \mu_0<\cdots<\mu_{q_0-1},
    \label{eq:ordered-W}
\end{equation}
and let \(P_i^{W}(s)\) be the analytic rank-one projector on the spectral
sheet whose small-noise limit is
\(Q_0\V{\phi}_i\V{\phi}_i^TQ_0^T\).  Thus the branches are selected by
\(W\), rather than by an arbitrary basis of the clean null space.  For real
\(\sigma>0\) sufficiently close to zero,
\(P_i^W(\sigma)=P_{(i)}(\sigma)\), where \(P_{(i)}\) denotes the
instantaneous increasingly ordered SVD projector used in the experiments.
The notebook label ``Rank \(i+1\)'' is this instantaneous SVD order; it is
not an intrinsic dimension of a null vector.

Define the conditional and expected ordered-direction risks by
\begin{align}
    R_{\mathrm{gen},i}^{\mathrm{ord}}(\sigma;\mathcal T)
       &=\sigma^2+\operatorname{tr}\!\left(BP_{(i)}(\sigma)\right),
       \label{eq:ordered-conditional-risk}\\
    \varepsilon_{\mathrm{gen},i}^{\mathrm{ord}}(\sigma)
       &=\mathbb E_{\mathrm{tr}}
          [R_{\mathrm{gen},i}^{\mathrm{ord}}(\sigma;\mathcal T)].
       \label{eq:ordered-expected-risk}
\end{align}
For every retained rank $1\leq q\leq q_0$, the ordered branch risks add to
the aggregate risk:
\begin{equation}
    \sum_{i=0}^{q-1}
       \varepsilon_{\mathrm{gen},i}^{\mathrm{ord}}(\sigma)
       =\varepsilon_{\mathrm{gen},q}(\sigma).
    \label{eq:ordered-risk-sum}
\end{equation}

Let \(\V{g}(\sigma)\) collect these \(q_0\) expected risks and set
\begin{equation}
    p_i(\sigma)=
       \frac{\varepsilon_{\mathrm{gen},i}^{\mathrm{ord}}(\sigma)}
            {\|\V{g}(\sigma)\|_2},
    \qquad
    \Delta_{\mathrm{rank}}(\sigma)
       =\max_i p_i(\sigma)-\min_i p_i(\sigma).
    \label{eq:rank-spread}
\end{equation}
At \(\sigma=0\) both numerator and denominator vanish; we define
\(p_i(0)=1/\sqrt{q_0}\) and
\(\Delta_{\mathrm{rank}}(0)=0\) by their small-noise limits.  The
normalization in \eqref{eq:rank-spread} is applied \emph{after} the
training expectation, exactly as in the experiment.

For a visibility tolerance \(\delta_{\mathrm{vis}}>0\), put
\begin{equation}
    \mathcal C_{\mathrm{vis}}
       =\{\sigma>0:
          \Delta_{\mathrm{rank}}(\sigma)>\delta_{\mathrm{vis}}\}.
    \label{eq:rank-crossover-set}
\end{equation}
If this set is one interval, denote its endpoints by
\(\sigma_{\mathrm{lo}}(\delta_{\mathrm{vis}})\) and
\(\sigma_{\mathrm{hi}}(\delta_{\mathrm{vis}})\).  Multiple components must be reported
separately; the definition does not force a three-regime picture.

The expansions below are statements about ensemble expectations.  We assume
that their displayed remainders hold in \(L^1\), so that the finite
expansions may be averaged.  Pointwise analyticity for each realization is
not enough to supply this property for an unbounded Gaussian ensemble.

\begin{prop}[Small-noise equalization and corrected lower crossover]
\label{prop:small-noise-equalization}
Let \(h_\star=\|H\|_2\) and \(\zeta=\sigma/h_\star\).  Assume that the
ordered expected risks may be expanded through sixth order and that their
maximizing and minimizing branches are unique and do not switch near zero.
Then
\begin{equation}
    \varepsilon_{\mathrm{gen},i}^{\mathrm{ord}}(\sigma)
      =h_\star^2\zeta^2
        \left(a+b_i\zeta^2+c_i\zeta^4+O(\zeta^6)\right),
    \qquad
    \boxed{a=1+\frac{n}{\tau-n-1}},
    \label{eq:ordered-small-risk}
\end{equation}
and every branch has the same leading coefficient \(a\).  Equivalently,
\begin{equation}
 \boxed{
 \varepsilon_{\mathrm{gen},i}^{\mathrm{ord}}(\sigma)
   =\left(1+\frac{n}{\tau-n-1}\right)\sigma^2+O(\sigma^4).}
 \label{eq:ordered-small-leading-law}
\end{equation}
The first rank-dependent contribution is contained in the
\(O(\sigma^4)\) term.  If
\(i_+=\arg\max_i b_i\), \(i_-=\arg\min_i b_i\), and
\(\bar b=q_0^{-1}\sum_i b_i\), then
\begin{equation}
    \Delta_{\mathrm{rank}}(\sigma)
       =C_2\zeta^2+C_4\zeta^4+O(\zeta^6),
    \label{eq:rank-small-spread}
\end{equation}
where
\begin{align}
    C_2&=\frac{b_{i_+}-b_{i_-}}{a\sqrt{q_0}},
       \label{eq:C2-definition}\\
    C_4&=\frac{1}{\sqrt{q_0}}
       \left[
       \frac{c_{i_+}-c_{i_-}}{a}
       -\frac{\bar b(b_{i_+}-b_{i_-})}{a^2}
       \right].
       \label{eq:C4-definition}
\end{align}
If \(C_2>0\), then, for sufficiently small \(\delta_{\mathrm{vis}}\) such
that the discriminant below is nonnegative and the positive root remains in
this expansion and no-switch neighborhood, the fourth-order corrected lower
crossover is
\begin{equation}
\boxed{
    \sigma_{\mathrm{lo}}(\delta_{\mathrm{vis}})
      \simeq h_\star
       \left(
       \frac{2\delta_{\mathrm{vis}}}
       {C_2+\sqrt{C_2^2+4C_4\delta_{\mathrm{vis}}}}
       \right)^{1/2}.}
    \label{eq:corrected-lower-crossover}
\end{equation}
For \(C_4=0\), this reduces to
\(h_\star\sqrt{\delta_{\mathrm{vis}}/C_2}\).
\end{prop}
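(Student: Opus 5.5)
The plan has two parts. First we compute the common leading coefficient \(a\) from the first-order solution. Then we treat \(\Delta_{\mathrm{rank}}\) as a ratio of power series in \(\zeta^2\) and invert the truncated quadratic for \(\sigma_{\mathrm{lo}}\).

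\emph{Leading coefficient.} By \eqref{eq:ordered-conditional-risk}, \(R^{\mathrm{ord}}_{\mathrm{gen},i}=\sigma^2+\operatorname{tr}(BP_{(i)})\). For small \(\sigma\), \(P_{(i)}=P_i^W\) has the expansion \(P_i^W=\pi_i+\sigma P_{i,1}+\sigma^2P_{i,2}+\cdots\), with \(\pi_i=Q_0\V{\phi}_i\V{\phi}_i^TQ_0^T\). Since \(B\pi_i=0\) and \(BQ_0=0\), the first two orders of \(\operatorname{tr}(BP_i^W)\) vanish. Write the unit eigenvector as \(\V{u}_i=Q_0\V{\phi}_i+\sigma\V{u}_{i,1}+\cdots\). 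As in \eqref{eq:empirical-range-elimination}, \(\V{u}_{i,1}=-GA_1Q_0\V{\phi}_i=-(Z^T)^+\cE^TQ_0\V{\phi}_i\). Hence
\[
\operatorname{tr}(BP_i^W)=\sigma^2\,\V{\phi}_i^TQ_0^T\cE Z^+BZ^{+T}\cE^TQ_0\V{\phi}_i+O(\sigma^3).
\]
With \(Z=HX\) and \(H\) of full column rank, \(Z^+=X^+H^+\), so \(Z^+BZ^{+T}=X^+(X^+)^T\). Under the Gaussian model, \(Y=Q_0^T\cE\) is independent of \(X\) and of \(W\)'s orthogonal complement structure. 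The key point is that \(\V{\phi}_i\) is a function of \(W=Y(I-\Pi_X)Y^T\), while the term \(Y X^+(X^+)^TY^T\) involves \(Y\Pi_X\). Given \(X\), the Gaussian matrices \(Y\Pi_X\) and \(Y(I-\Pi_X)\) are independent. So conditioning on \(X\) and \(Y(I-\Pi_X)\), which fixes \(\V{\phi}_i\), gives
\[
\mathbb E[\V{\phi}_i^TYX^+(X^+)^TY^T\V{\phi}_i\mid\cdot\,]=\operatorname{tr}((XX^T)^{-1}).
\]
The inverse-Wishart identity \(\mathbb E\operatorname{tr}((XX^T)^{-1})=n/(\tau-n-1)\) then gives \(a=1+n/(\tau-n-1)\), independent of \(i\). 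The \(O(\sigma^3)\) terms are odd in \(\cE\) under the symmetry \(\cE\mapsto-\cE\), which preserves the ordered projectors. Their expectations therefore vanish, and only even powers of \(\zeta\) survive. The \(L^1\) remainder hypothesis lets us average through sixth order, producing \eqref{eq:ordered-small-risk} after rescaling by \(h_\star\).

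\emph{Spread expansion.} Write \(\varepsilon_i=h_\star^2\zeta^2 f_i\) with \(f_i=a+b_i x+c_i x^2+O(x^3)\) and \(x=\zeta^2\). The prefactor cancels in \(p_i=f_i/\|\V f\|_2\). Expand
\[
\|\V f\|_2^2=q_0a^2+2a\textstyle\sum_i b_i\,x+O(x^2),
\]
so that \(\|\V f\|_2^{-1}=(a\sqrt{q_0})^{-1}\bigl(1-(\bar b/a)x+O(x^2)\bigr)\). Then
\[
p_{i_+}-p_{i_-}=\frac{1}{a\sqrt{q_0}}\Bigl[(b_{i_+}-b_{i_-})x+(c_{i_+}-c_{i_-})x^2-\tfrac{\bar b}{a}(b_{i_+}-b_{i_-})x^2\Bigr]+O(x^3).
\]
Here the \(x^2\) cross term uses only the first-order part of the inverse norm, because the difference has no \(x^0\) term. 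This gives \(C_2\) and \(C_4\) as in \eqref{eq:C2-definition} and \eqref{eq:C4-definition}. The no-switch and uniqueness hypotheses guarantee that max and min are attained at \(i_\pm\) throughout the neighborhood, so \(\Delta_{\mathrm{rank}}=p_{i_+}-p_{i_-}\) there.

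\emph{Crossover.} Set \(C_2x+C_4x^2=\delta_{\mathrm{vis}}\) and take the positive root. Rationalizing gives
\[
x=\frac{2\delta_{\mathrm{vis}}}{C_2+\sqrt{C_2^2+4C_4\delta_{\mathrm{vis}}}},
\]
which is the root continuous at \(\delta_{\mathrm{vis}}=0\). Then \(\sigma_{\mathrm{lo}}=h_\star\sqrt{x}\), valid up to the \(O(x^3)\) remainder, yields \eqref{eq:corrected-lower-crossover}. Setting \(C_4=0\) reduces this to \(h_\star\sqrt{\delta_{\mathrm{vis}}/C_2}\).

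The main obstacle is the first step: showing rigorously that the branch selection by \(\V{\phi}_i\) is independent of the quadratic form in \(Y\Pi_X\). A related subtlety is that the rank-dependent corrections start only at \(\sigma^4\), which relies on the parity argument together with the \(L^1\) remainder hypothesis.
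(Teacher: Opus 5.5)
Your proposal is correct and follows the paper's proof essentially step for step. You use the same conditioning on \(X\) and on the residual block \(Q_0^T\cE(I_\tau-\Pi_X)\), which fixes the \(W\)-labels, to get the branch-independent value \(\operatorname{tr}((XX^T)^{-1})\) and hence \(a\); the same \(\cE\mapsto-\cE\) parity argument; and the same expansion of the normalized profile with the rationalized quadratic root. One wording fix: only the odd-order coefficients change sign under \(\cE\mapsto-\cE\), not all \(O(\sigma^3)\) terms, and the flip does not literally preserve the projectors — it preserves \(W\) and its labels while sending \(\sigma\mapsto-\sigma\), so the order-\(k\) coefficient acquires a factor \((-1)^k\).
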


\begin{proof}
Let \(\V{q}_i=Q_0\V{\phi}_i\).  The first-order component of the \(i\)-th
branch outside the clean null space is
\begin{equation}
    \V{v}_{i,1}=-(Z^T)^+\cE^T\V{q}_i.
    \label{eq:ordered-first-vector}
\end{equation}
Put
\(\Pi_X=X^T(XX^T)^{-1}X\).  Since \(H\) has full column rank and \(X\)
has full row rank almost surely,
\begin{equation}
    Z^T(ZZ^T)^+Z=\Pi_X,
\end{equation}
and the effective splitting has the equivalent form
\begin{equation}
    W=Q_0^T\cE(I_\tau-\Pi_X)\cE^TQ_0.
    \label{eq:W-residual-form}
\end{equation}
Thus \(\V{q}_i\) is determined by the residual Gaussian block
\(\cE(I_\tau-\Pi_X)\), whereas the first-order signal leakage in
\eqref{eq:ordered-first-vector} uses the independent block
\(\cE\Pi_X\).  Moreover,
\begin{equation}
    H^T\V{v}_{i,1}
      =-(XX^T)^{-1}X\cE^T\V{q}_i.
\end{equation}
Conditioning on \(X\) and on the residual block therefore gives
\begin{equation}
    \mathbb E\!\left[
      \V{v}_{i,1}^TB\V{v}_{i,1}
      \mathrel{|}X,W\right]
       =\operatorname{tr}\!\left((XX^T)^{-1}\right),
\end{equation}
independently of \(i\).  Since
\(XX^T\sim\operatorname{Wishart}_n(\tau,I_n)\),
\begin{equation}
    \mathbb E[(XX^T)^{-1}]
       =\frac{I_n}{\tau-n-1},
\end{equation}
which proves the value of \(a\) after adding the unit test-noise term.

The transformation \(\cE\mapsto-\cE\) preserves the Gaussian ensemble and
the \(W\)-labels but changes the sign of the perturbation parameter.  Hence
the expected odd coefficients vanish.  If
\(P_i^W(s)=\sum_{k\ge0}s^kP_{i,k}^W\) is the \(W\)-labelled branch
projector, then the main generalization series gives
\begin{equation}
    b_i=h_\star^2\operatorname{tr}\!\left(
       B\,\mathbb E_{\mathrm{tr}}[P_{i,4}^W]\right),
    \qquad
    c_i=h_\star^4\operatorname{tr}\!\left(
       B\,\mathbb E_{\mathrm{tr}}[P_{i,6}^W]\right).
    \label{eq:bc-projector-coefficients}
\end{equation}
These are the fourth- and sixth-order coefficients of the existing
projector-risk expansion, after the scale \(h_\star\) is extracted.

Finally, expanding the common denominator in
\eqref{eq:rank-spread} gives
\begin{equation}
 p_i(\sigma)=\frac{1}{\sqrt{q_0}}
 \left[
 1+\frac{b_i-\bar b}{a}\zeta^2
 +\left\{
   \frac{c_i-\bar c}{a}
   -\frac{\bar b(b_i-\bar b)}{a^2}
   +\kappa
  \right\}\zeta^4
 +O(\zeta^6)
 \right],
 \label{eq:normalized-risk-expansion}
\end{equation}
where \(\bar c=q_0^{-1}\sum_i c_i\) and \(\kappa\) is independent of \(i\).
Taking the range yields \eqref{eq:C2-definition}--
\eqref{eq:C4-definition}.  Solving
\(\delta_{\mathrm{vis}}=C_2\zeta^2+C_4\zeta^4\) for the root continuous
from zero gives
\eqref{eq:corrected-lower-crossover}.
\end{proof}

The coefficient fit in the experiment uses
\begin{equation}
 \frac{\varepsilon_{\mathrm{gen},i}^{\mathrm{ord}}
       (h_\star\zeta)}{h_\star^2\zeta^2}
   =\alpha_i+b_i\zeta^2+c_i\zeta^4+e_i\zeta^6.
 \label{eq:rank-coefficient-fit}
\end{equation}
The separate intercepts \(\alpha_i\) absorb Monte Carlo error in the
theoretically common \(a\); the \(e_i\)-term is a nuisance coefficient that
prevents the finite fitting interval from contaminating \(b_i\) and \(c_i\).
Only \(b_i\) and \(c_i\) enter the corrected threshold.

\begin{prop}[Large-noise equalization and upper crossover]
\label{prop:large-noise-equalization}
Assume additionally that \(\tau\geq m\), and that the ordered projector
expansion at inverse noise has an \(L^1\) remainder through fourth order.
Set \(\overline Z=Z/h_\star\), \(\overline B=B/h_\star^2\), and
\(t=\zeta^{-1}\).  Let
\begin{equation}
 \Pi_i(t)=\sum_{k\geq0}t^k\Pi_{i,k}^{(\infty)}
 \label{eq:large-noise-projector-series}
\end{equation}
be the ordered rank-one eigenprojector of
\((\cE+t\overline Z)(\cE+t\overline Z)^T\).  Then
\begin{equation}
 \varepsilon_{\mathrm{gen},i}^{\mathrm{ord}}(\sigma)
   =h_\star^2\left(
      \zeta^2+c_\infty+d_i^{(\infty)}\zeta^{-2}
      +O(\zeta^{-4})\right),
 \label{eq:ordered-large-risk}
\end{equation}
where
\begin{equation}
 c_\infty=\frac{\operatorname{tr}(\overline B)}{m}
   =\frac{\|H\|_F^2}{m h_\star^2},
 \qquad
 d_i^{(\infty)}=\operatorname{tr}\!\left(
   \overline B\,\mathbb E_{\mathrm{tr}}
      [\Pi_{i,2}^{(\infty)}]\right).
 \label{eq:large-risk-coefficients}
\end{equation}
In unscaled variables, the common leading law is
\begin{equation}
 \boxed{
 \varepsilon_{\mathrm{gen},i}^{\mathrm{ord}}(\sigma)
   =\sigma^2+\frac{\|H\|_F^2}{m}+O(\sigma^{-2}).}
 \label{eq:ordered-large-leading-law}
\end{equation}
Consequently,
\begin{equation}
 \Delta_{\mathrm{rank}}(\sigma)
   =C_H\zeta^{-4}+O(\zeta^{-6}),
 \qquad
 C_H=\frac{\max_i d_i^{(\infty)}-\min_i d_i^{(\infty)}}
          {\sqrt{q_0}}.
 \label{eq:rank-large-spread}
\end{equation}
When \(C_H>0\), the leading upper visibility crossover is
\begin{equation}
 \boxed{
 \sigma_{\mathrm{hi}}(\delta_{\mathrm{vis}})
   \simeq h_\star
       \left(\frac{C_H}{\delta_{\mathrm{vis}}}\right)^{1/4}.}
 \label{eq:upper-crossover}
\end{equation}
\end{prop}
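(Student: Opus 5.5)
Work throughout in scaled variables. Since \(\widetilde Z=\sigma\cE+Z=\sigma(\cE+t\overline Z)\) with \(t=h_\star/\sigma=\zeta^{-1}\), the Gram matrix is
\[
A(\sigma)=\sigma^2(\cE+t\overline Z)(\cE+t\overline Z)^T .
\]
Its ordered eigenprojectors therefore coincide with the \(\Pi_i(t)\) of \((\cE+t\overline Z)(\cE+t\overline Z)^T\). The positive factor \(\sigma^2\) does not change the ordering, so \(P_{(i)}(\sigma)=\Pi_i(\zeta^{-1})\). By \eqref{eq:ordered-conditional-risk} and \(B=h_\star^2\overline B\),
\[
\varepsilon^{\mathrm{ord}}_{\mathrm{gen},i}(\sigma)=h_\star^2\bigl(\zeta^2+\operatorname{tr}(\overline B\,\mathbb E_{\mathrm{tr}}[\Pi_i(t)])\bigr).
\]
I would then insert the projector series \eqref{eq:large-noise-projector-series} and use the assumed \(L^1\) remainder through fourth order to average termwise. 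This gives \(\operatorname{tr}(\overline B\,\mathbb E\Pi_{i,0}^{(\infty)})+t\,\operatorname{tr}(\overline B\,\mathbb E\Pi_{i,1}^{(\infty)})+t^2 d_i^{(\infty)}+t^3(\cdot)+O(t^4)\).

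The constant term comes from \(\Pi_{i,0}^{(\infty)}\), the \(i\)-th ordered eigenprojector of the pure-noise Wishart \(\cE\cE^T\). Since \(\tau\geq m\), this matrix is a.s. positive definite with simple spectrum, so the projector is well defined. By orthogonal invariance of the Gaussian ensemble (\(\cE\mapsto O\cE\)), its eigenvector is Haar distributed on the sphere independently of the ordering. Hence \(\mathbb E\Pi_{i,0}^{(\infty)}=I_m/m\), which gives \(c_\infty=\operatorname{tr}(\overline B)/m\), the same for every \(i\).

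The odd coefficients vanish by a sign symmetry. The map \((X,\cE)\mapsto(-X,\cE)\), equivalently \(\overline Z\mapsto-\overline Z\), preserves the training law. It sends \((\cE+t\overline Z)\) to \((\cE-t\overline Z)\), so \(\Pi_i(t)\) becomes \(\Pi_i(-t)\) in distribution, labels included, since the spectrum of the perturbation is unchanged at each real \(t\). Thus \(\mathbb E\Pi_{i,k}^{(\infty)}=0\) for odd \(k\), which kills the \(t\) and \(t^3\) terms. This yields \eqref{eq:ordered-large-risk}, and multiplying out gives \eqref{eq:ordered-large-leading-law}.

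For the spread I would expand the normalization as in the proof of \Cref{prop:small-noise-equalization}, now in the small variable \(t^2\). Write \(g_i=h_\star^2\zeta^2\bigl(1+c_\infty t^2+d_i^{(\infty)}t^4+O(t^6)\bigr)\). Then \(\|\V g\|_2=h_\star^2\zeta^2\sqrt{q_0}\bigl(1+c_\infty t^2+\bar d t^4+\kappa t^4+O(t^6)\bigr)\), where \(\kappa\) does not depend on \(i\). Consequently \(p_i=q_0^{-1/2}\bigl(1+(d_i^{(\infty)}-\bar d)t^4+\kappa' t^4+O(t^6)\bigr)\), and taking max minus min gives \(C_H t^4=C_H\zeta^{-4}\). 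The one thing to check is that the remainder is \(O(t^6)\) rather than \(O(t^5)\): the \(L^1\) hypothesis "through fourth order" gives an \(O(t^5)\) remainder, and the parity argument removes the fifth-order term, provided the expansion assumption is read as including it. Solving \(\delta_{\mathrm{vis}}=C_H\zeta^{-4}\) gives \(\zeta=(C_H/\delta_{\mathrm{vis}})^{1/4}\), which is \eqref{eq:upper-crossover}.

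The main obstacle is the Haar step together with the label-preserving symmetry. The ordered projector is only piecewise analytic, so one must invoke simplicity of the Wishart spectrum (this is where \(\tau\geq m\) enters) to make \(\Pi_i\) well defined almost surely. The independence of the eigenvector law from the ordering index is the standard fact that the eigenvector matrix of a real Wishart is Haar and independent of the eigenvalues.
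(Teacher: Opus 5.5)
Your proposal is correct and follows the paper's proof essentially step for step: the \(\sigma^2\) scaling reduces the problem to the small-\(t\) ordered projector of \((\cE+t\overline Z)(\cE+t\overline Z)^T\), Haar invariance of the Wishart eigenvectors gives \(\mathbb E\Pi_{i,0}^{(\infty)}=I_m/m\), the symmetry \(X\mapsto-X\) removes the odd coefficients, and dividing the raw spread by the common norm \(h_\star^2\sqrt{q_0}\,\zeta^2(1+O(\zeta^{-2}))\) gives \(C_H\zeta^{-4}\) and the crossover. Your closing concern about an \(O(t^5)\) term is unnecessary: the relative profile is \(1+t^2\operatorname{tr}(\overline B\,\mathbb E\Pi_i(t))\), so the \(O(t^4)\) trace remainder you already obtained (after parity removed the \(t^3\) term) is exactly an \(O(t^6)\) relative remainder.
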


\begin{proof}
The noise factor \(\sigma^2\) changes the eigenvalues of the noisy Gram
matrix but not its eigenvectors, so the large-noise problem is exactly the
small-\(t\) projector problem in
\eqref{eq:large-noise-projector-series}.  At \(t=0\),
\(\cE\cE^T\) is a real isotropic Wishart matrix.  Its ordered eigenvectors
are marginally Haar distributed and independent of its ordered eigenvalues;
hence
\begin{equation}
 \mathbb E_{\mathrm{tr}}[\Pi_{i,0}^{(\infty)}]
    =\frac{I_m}{m}
 \label{eq:haar-projector-mean}
\end{equation}
for every rank \(i\).  This gives the common term \(c_\infty\).
The symmetry \(X\mapsto-X\) preserves the training ensemble and maps
\(t\mapsto-t\), so all expected odd inverse-noise coefficients vanish.
The first possibly rank-dependent term is therefore the displayed
\(t^2\)-coefficient \(d_i^{(\infty)}\), proving
\eqref{eq:ordered-large-risk}.

The common leading norm is
\(\|\V{g}(\sigma)\|_2
  =h_\star^2\sqrt{q_0}\,\zeta^2+O(1)\).  Taking the range of the normalized
risks therefore divides the raw difference
\(h_\star^2(\max_i d_i^{(\infty)}-\min_i d_i^{(\infty)})
\zeta^{-2}+O(\zeta^{-4})\)
by this common norm.  This proves \eqref{eq:rank-large-spread}; solving its
leading term for \(\delta_{\mathrm{vis}}\) gives
\eqref{eq:upper-crossover}.
\end{proof}

\begin{coro}[Equal--ranked--equal profile]
\label{cor:equal-ranked-equal}
Under the assumptions of the two propositions,
\begin{equation}
 p_i(\sigma)\longrightarrow\frac{1}{\sqrt{q_0}}
 \quad\text{as }\sigma\downarrow0
 \quad\text{and as }\sigma\to\infty.
 \label{eq:endpoint-uniform-profile}
\end{equation}
Thus rank dependence can first appear at order \(\zeta^2\) in the normalized
small-noise profile and is \(O(\zeta^{-4})\) in the normalized
large-noise profile.  If \(\Delta_{\mathrm{rank}}\) is continuous and not
identically zero, it has a positive maximum at a finite, nonzero noise
level.
\end{coro}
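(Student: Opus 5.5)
The plan is to read off both endpoint limits directly from the expansions in \Cref{prop:small-noise-equalization} and \Cref{prop:large-noise-equalization}. The final claim about an interior maximum will then follow from an extreme-value argument on a compactified noise interval.

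\emph{Small-noise end.} By \eqref{eq:ordered-small-risk}, every entry of \(\V{g}(\sigma)\) has the form \(h_\star^2\zeta^2(a+b_i\zeta^2+O(\zeta^4))\) with the same \(a>0\). Hence
\[
\|\V{g}(\sigma)\|_2=h_\star^2\zeta^2\bigl(a\sqrt{q_0}+O(\zeta^2)\bigr).
\]
Dividing, \(p_i(\sigma)=1/\sqrt{q_0}+O(\zeta^2)\); this is exactly the leading term of \eqref{eq:normalized-risk-expansion}. The first \(i\)-dependent correction is \((b_i-\bar b)\zeta^2/(a\sqrt{q_0})\), so rank dependence can first appear at order \(\zeta^2\).

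\emph{Large-noise end.} By \eqref{eq:ordered-large-risk}, each entry equals \(h_\star^2(\zeta^2+c_\infty+d_i^{(\infty)}\zeta^{-2}+O(\zeta^{-4}))\), with \(\zeta^2\) and \(c_\infty\) common to all \(i\). Factoring out \(h_\star^2\zeta^2\) gives \(p_i=(1+c_\infty\zeta^{-2}+d_i^{(\infty)}\zeta^{-4}+\cdots)/\|\cdot\|\). The common factor cancels, and the expansion becomes
\[
p_i=\frac{1}{\sqrt{q_0}}\Bigl(1+\bigl(d_i^{(\infty)}-\bar d\bigr)\zeta^{-4}+O(\zeta^{-6})\Bigr).
\]
This is consistent with \eqref{eq:rank-large-spread}. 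So \(p_i\to1/\sqrt{q_0}\) as \(\sigma\to\infty\), and rank dependence is \(O(\zeta^{-4})\).

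\emph{Interior maximum.} These limits give \(\Delta_{\mathrm{rank}}(\sigma)\to0\) at both ends, and \(\Delta_{\mathrm{rank}}(0)=0\) by definition. Extend \(\Delta_{\mathrm{rank}}\) by \(0\) at \(\sigma=\infty\) and view it as a continuous function on the compact interval \([0,\infty]\); equivalently, reparametrize by \(\theta=\arctan\sigma\in[0,\pi/2]\). Since \(\Delta_{\mathrm{rank}}\geq0\) and is not identically zero, there is a \(\sigma_1\) with \(\Delta_{\mathrm{rank}}(\sigma_1)>0\). By the extreme value theorem, the maximum over the compact interval is attained, and it is at least this positive value. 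It therefore cannot occur at either endpoint, where the value is \(0\), so it is attained at some finite \(\sigma>0\).

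The only delicate step is justifying continuity at the two endpoints, which the expansions supply. One should also note that \(\|\V{g}(\sigma)\|_2>0\) for \(\sigma>0\), so that \(p_i\) is well defined. This holds because every ordered risk is at least \(\sigma^2\) by \eqref{eq:ordered-conditional-risk}.
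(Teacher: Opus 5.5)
Your proposal is correct and takes essentially the paper's route: the paper gives no separate proof of the corollary and reads the endpoint limits directly off the normalized expansions \eqref{eq:normalized-risk-expansion} and \eqref{eq:rank-large-spread} from the proofs of \Cref{prop:small-noise-equalization,prop:large-noise-equalization}, as you do, and your explicit large-noise form \(p_i=q_0^{-1/2}\bigl(1+(d_i^{(\infty)}-\bar d)\zeta^{-4}+O(\zeta^{-6})\bigr)\) checks out. The paper leaves the interior-maximum claim implicit; your extreme-value argument on the compactified axis \([0,\infty]\), together with \(\|\V{g}(\sigma)\|_2\geq\sqrt{q_0}\,\sigma^2>0\), is the natural way to supply it.
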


\subsubsection{Why the intermediate generalization risks are ranked}

There is no pointwise implication from empirical ordering to generalization
ordering: two sample eigenvectors can exchange their population leakage in
an individual trial.  Under Gaussian training, however, the complete noisy
data matrix has the exact distribution
\begin{equation}
 \widetilde Z(\sigma)_{:t}\sim
 \mathcal N(0,\Sigma_\sigma),
 \qquad
 \Sigma_\sigma=B+\sigma^2I_m,
 \qquad
 A(\sigma)\sim
 \operatorname{Wishart}_m(\tau,\Sigma_\sigma).
 \label{eq:noisy-Wishart-reduction}
\end{equation}
For the next two results, extend the ordered-risk notation to
\(0\leq i<m\) using the instantaneous ordered sample eigenvectors.
Consequently, if \(\widehat{\V{u}}_{(i)}\) is the sample eigenvector of
increasing rank \(i\), then
\begin{equation}
 \varepsilon_{\mathrm{gen},i}^{\mathrm{ord}}(\sigma)
  =\sigma^2+
    \mathbb E[\widehat{\V{u}}_{(i)}^TB\widehat{\V{u}}_{(i)}]
  =\mathbb E[\widehat{\V{u}}_{(i)}^T
       \Sigma_\sigma\widehat{\V{u}}_{(i)}].
 \label{eq:ordered-population-Rayleigh}
\end{equation}
The next theorem proves the ranking exactly when the nonzero population
eigenvalues are equal.

\begin{theorem}[Exact rank ordering for an isotropic signal subspace]
\label{thm:isotropic-middle-ranking}
Suppose \(B=\beta\Pi\), where \(\beta>0\) and \(\Pi\) is an orthogonal
projector with \(0<\operatorname{rank}(\Pi)<m\), and suppose
\(\tau\geq m\).  For every finite \(\sigma>0\), all ordered expected
generalization risks are strictly increasing:
\begin{equation}
 \varepsilon_{\mathrm{gen},0}^{\mathrm{ord}}(\sigma)
  <\varepsilon_{\mathrm{gen},1}^{\mathrm{ord}}(\sigma)
  <\cdots<
  \varepsilon_{\mathrm{gen},m-1}^{\mathrm{ord}}(\sigma).
 \label{eq:isotropic-strict-risk-order}
\end{equation}
In particular, this holds for the bottom \(q_0=m-n\) branches.
\end{theorem}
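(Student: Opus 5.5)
The plan is to use the exact Wishart reduction \eqref{eq:noisy-Wishart-reduction} and \eqref{eq:ordered-population-Rayleigh}. Since \(B=\beta\Pi\), we have \(\varepsilon_{\mathrm{gen},i}^{\mathrm{ord}}(\sigma)=\sigma^2+\beta\,\mathbb E[x_i]\), where \(x_i=\|\Pi\widehat{\V{u}}_{(i)}\|_2^2\). It therefore suffices to show \(\mathbb E[x_j-x_i]>0\) for every \(i<j\). Because \(\tau\geq m\), the matrix \(A=A(\sigma)\) has the density
\[
p(A)\propto\det(A)^{(\tau-m-1)/2}\exp\!\left(-\tfrac12\operatorname{tr}(\Sigma_\sigma^{-1}A)\right),
\qquad
\Sigma_\sigma^{-1}=\sigma^{-2}I_m-\gamma\Pi,
\qquad
\gamma=\frac{\beta}{\sigma^2(\sigma^2+\beta)}>0 .
\]
Its spectrum is almost surely simple and positive, as already used in \Cref{prop:empirical-ranking}.

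First I would pass to spectral coordinates \(A=UDU^T\), with \(D=\operatorname{diag}(d_0<\cdots<d_{m-1})\) and \(U\in O(m)\). The ordered eigenvectors are the columns \(\V{u}_k\) of \(U\), determined up to signs that do not affect \(x_k\). The standard Jacobian for the change of variables gives a joint density with respect to \(dD\times\) Haar\((dU)\). This density factors as \(f(D)\,\omega(U,D)\), where
\[
f(D)=\prod_k d_k^{(\tau-m-1)/2}e^{-d_k/(2\sigma^2)}\prod_{k<l}(d_l-d_k),
\qquad
\omega(U,D)=\exp\!\Bigl(\tfrac{\gamma}{2}\sum_k d_k x_k(U)\Bigr),
\]
and \(x_k(U)=\|\Pi\V{u}_k\|_2^2\). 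The isotropic assumption is exactly what makes the \(U\)-dependence enter only through this linear combination of the overlaps.

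Next comes the column swap. Fix \(i<j\) and let \(S\) be the right multiplication of \(U\) by the permutation matrix exchanging columns \(i\) and \(j\). The map \(S\) preserves Haar measure and exchanges \(x_i\) and \(x_j\). Put \(\Delta=x_j-x_i\) and \(c=\tfrac{\gamma}{2}(d_j-d_i)>0\). Averaging the integrand of \(\mathbb E[\Delta]\) with its image under \(S\), for fixed \(D\), gives
\[
\mathbb E[\Delta]=\frac{1}{2Z}\int f(D)\,\omega_0\,\Delta\bigl(e^{c\Delta}-e^{-c\Delta}\bigr)\,d\mathrm{Haar}(U)\,dD,
\]
where \(Z\) is the normalizing constant. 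Here \(\omega_0>0\) collects the \(S\)-invariant factor \(\exp(\tfrac{\gamma}{2}[\sum_{k\ne i,j}d_kx_k+\tfrac12(d_i+d_j)(x_i+x_j)])\). The integrand satisfies \(\Delta\cdot 2\sinh(c\Delta)\geq0\), with equality only when \(\Delta=0\).

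It remains to see that \(\{x_i(U)=x_j(U)\}\) is Haar-null. This is the step I expect to need the most care, together with the integrability and Jacobian bookkeeping. The function \(U\mapsto x_j-x_i\) is a real-analytic function on \(O(m)\), and it is not identically zero on either component. When \(0<\operatorname{rank}\Pi<m\), we can choose \(U\) with \(\V{u}_j\in\operatorname{range}\Pi\) and \(\V{u}_i\perp\operatorname{range}\Pi\), which gives \(\Delta=1\). Composing with a reflection handles the other component. The zero set of a nontrivial analytic function on a connected analytic manifold has measure zero, so the integrand is strictly positive almost everywhere. Since \(f\,\omega\) is integrable (the density is normalized and \(\Delta\) is bounded), \(\mathbb E[\Delta]>0\).

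Applying this to each consecutive pair gives \eqref{eq:isotropic-strict-risk-order} for all \(m\) branches, and in particular for the bottom \(q_0\) branches. This holds for every finite \(\sigma>0\), because \(\gamma>0\) there.
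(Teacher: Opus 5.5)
Your proposal is correct and follows essentially the same route as the paper: condition on the ordered sample eigenvalues, use the Wishart angular weight \(\exp\{\tfrac{\gamma}{2}\sum_k d_k x_k(U)\}\) (the paper's \(\kappa_\sigma=\gamma/2\)), and pair each \(U\) with its \(i\leftrightarrow j\) column swap to obtain a nonnegative integrand that is positive off a Haar-null set. One small slip does not affect the sign argument: after you split off \(\omega_0\), the remaining exponent is \(\tfrac{\gamma}{4}(d_j-d_i)\Delta\), so your \(c\) should be \(\tfrac{\gamma}{4}(d_j-d_i)\) rather than \(\tfrac{\gamma}{2}(d_j-d_i)\).
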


\begin{proof}
Condition on the increasingly ordered, almost surely distinct eigenvalues
\(\lambda_{(0)}<\cdots<\lambda_{(m-1)}\) of \(A\), and write
\(A=U\operatorname{diag}(\lambda_{(i)})U^T\).  The Wishart density in
\eqref{eq:noisy-Wishart-reduction}, relative to Haar measure on \(U\), is
proportional to
\begin{equation}
 \exp\!\left\{
   \kappa_\sigma\sum_{k=0}^{m-1}\lambda_{(k)}w_k(U)
 \right\},
 \qquad
 w_k(U)=\V{u}_{(k)}^T\Pi\V{u}_{(k)},
 \qquad
 \kappa_\sigma=
   \frac{\beta}{2\sigma^2(\sigma^2+\beta)}>0.
 \label{eq:isotropic-angular-density}
\end{equation}
Indeed,
\((\sigma^2I+\beta\Pi)^{-1}
 =\sigma^{-2}I-
   \beta[\sigma^2(\sigma^2+\beta)]^{-1}\Pi\), and the first term is
constant after conditioning on the sample eigenvalues.

Fix \(i<j\), interchange columns \(i\) and \(j\) of \(U\), and denote the
result by \(U^{ij}\).  With
\(d=w_j(U)-w_i(U)\), the two log weights differ by
\begin{equation}
 F(U)-F(U^{ij})
   =\kappa_\sigma
      (\lambda_{(j)}-\lambda_{(i)})d.
 \end{equation}
Pairing every \(U\) with \(U^{ij}\) in the conditional integral for
\(\mathbb E[w_j-w_i\mid\lambda]\) gives an integrand proportional to
\begin{equation}
 d\{e^{F(U)}-e^{F(U^{ij})}\}\geq0.
 \end{equation}
It is positive on a set of positive Haar measure because the sample
eigenvalues are distinct and \(\Pi\) is nontrivial.  Hence
\(\mathbb E[w_j\mid\lambda]>
  \mathbb E[w_i\mid\lambda]\).  Averaging over the eigenvalues and using
\eqref{eq:ordered-population-Rayleigh} proves
\eqref{eq:isotropic-strict-risk-order}.
\end{proof}

The theorem also explains why the ranking is most visible in the middle.
It is strict at every finite positive noise level, but
\Cref{prop:small-noise-equalization,prop:large-noise-equalization} show that
its normalized gaps are \(O(\zeta^2)\) as \(\zeta\downarrow0\) and
\(O(\zeta^{-4})\) as \(\zeta\to\infty\).  Thus ``equal'' at the two endpoints
means asymptotic equality, not exact equality at a finite \(\sigma\).

For unequal signal eigenvalues the angular tilt is instead
\begin{equation}
 C_\sigma=\sigma^{-2}I_m-(B+\sigma^2I_m)^{-1},
 \label{eq:unequal-angular-tilt}
\end{equation}
and the paired swap has the sign of the product of the \(B\)- and
\(C_\sigma\)-Rayleigh-quotient differences.  Although these two matrices
commute and their eigenvalues have the same order, the two Rayleigh
differences need not have the same sign when there are three or more distinct
population eigenvalues.  The isotropic proof therefore does not establish a
universal unequal-spike theorem.  The following identity gives an exact,
checkable mechanism without assuming equal spikes.

\begin{prop}[Population-overlap criterion]
\label{prop:overlap-ranking}
Let
\(B=\sum_{k=1}^m\beta_k\V{v}_k\V{v}_k^T\), with
\(0\leq\beta_1\leq\cdots\leq\beta_m\), and define the expected squared
overlaps
\begin{equation}
 \Omega_{ik}(\sigma)
   =\mathbb E[(\V{v}_k^T\widehat{\V{u}}_{(i)})^2].
 \label{eq:expected-overlap-matrix}
\end{equation}
For any \(i<j\), the exact expected-risk gap is
\begin{equation}
 \boxed{
 \varepsilon_{\mathrm{gen},j}^{\mathrm{ord}}(\sigma)
  -\varepsilon_{\mathrm{gen},i}^{\mathrm{ord}}(\sigma)
  =\sum_{\ell=2}^m(\beta_\ell-\beta_{\ell-1})
     \sum_{k=\ell}^m
       (\Omega_{jk}-\Omega_{ik}).}
 \label{eq:overlap-gap-identity}
\end{equation}
Therefore the risk is ordered whenever every cumulative overlap with the
upper population eigenspaces is nondecreasing with sample rank; the order is
strict if at least one such inequality is strict across a nonzero population
eigenvalue gap.
\end{prop}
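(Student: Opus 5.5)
The plan is to write each ordered risk as a linear functional of the overlap row $\Omega_{i\cdot}$ and then apply Abel summation in the population index. We start from \eqref{eq:ordered-population-Rayleigh}:
\[
\varepsilon_{\mathrm{gen},i}^{\mathrm{ord}}(\sigma)=\sigma^2+\mathbb E[\widehat{\V{u}}_{(i)}^TB\widehat{\V{u}}_{(i)}].
\]
Inserting the spectral decomposition of $B$ and using linearity of expectation (each summand is bounded by $\beta_m$, so integrability is immediate) gives
\[
\varepsilon_{\mathrm{gen},i}^{\mathrm{ord}}=\sigma^2+\sum_{k=1}^m\beta_k\Omega_{ik}.
\]
The test-noise term $\sigma^2$ is common to all branches and cancels in the gap. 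Therefore
\[
\varepsilon_{\mathrm{gen},j}^{\mathrm{ord}}-\varepsilon_{\mathrm{gen},i}^{\mathrm{ord}}=\sum_k\beta_k D_k,
\qquad D_k=\Omega_{jk}-\Omega_{ik}.
\]

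Next we perform summation by parts. The key observation is that $\{\V{v}_k\}$ is an orthonormal basis and each $\widehat{\V{u}}_{(i)}$ is a unit vector, so $\sum_k(\V{v}_k^T\widehat{\V{u}}_{(i)})^2=1$ pointwise. Hence $\sum_k\Omega_{ik}=1$ for every $i$, and so $\sum_{k=1}^m D_k=0$. Define the tail sums $T_\ell=\sum_{k=\ell}^m D_k$, so that $T_1=0$ and $D_k=T_k-T_{k+1}$ with $T_{m+1}=0$. Writing $\beta_k=\beta_1+\sum_{\ell=2}^k(\beta_\ell-\beta_{\ell-1})$ and exchanging the order of summation gives
\[
\sum_k\beta_kD_k=\beta_1T_1+\sum_{\ell=2}^m(\beta_\ell-\beta_{\ell-1})T_\ell .
\]
Since $T_1=0$, this is exactly \eqref{eq:overlap-gap-identity}.

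The consequences then follow directly. If every cumulative upper overlap $T_\ell$ is nonnegative for $i<j$ (nondecreasing in sample rank), then each term is a product of a nonnegative gap $\beta_\ell-\beta_{\ell-1}$ with a nonnegative $T_\ell$, so the gap is nonnegative. If some $T_\ell>0$ at an index where $\beta_\ell>\beta_{\ell-1}$, that term is strictly positive and the gap is strict.

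There is no real obstacle here. The only points needing care are the use of the unit-norm identity to eliminate the $\beta_1$ boundary term, and noting that a sign ambiguity or non-uniqueness of eigenvectors on null events does not affect the squared overlaps or the expectations.
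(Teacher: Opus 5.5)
Your proof is correct and follows essentially the same route as the paper: write the gap as \(\sum_k\beta_k(\Omega_{jk}-\Omega_{ik})\) via \eqref{eq:ordered-population-Rayleigh}, use that each row of \(\Omega\) sums to one, and apply summation by parts in the population index. You have written out the Abel-summation step, the integrability check and the sign conclusions in more detail than the paper's three-line proof does.
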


\begin{proof}
Equation \eqref{eq:ordered-population-Rayleigh} gives the gap as
\(\sum_k\beta_k(\Omega_{jk}-\Omega_{ik})\).  Both rows of
\(\Omega\) sum to one.  Summation by parts therefore yields
\eqref{eq:overlap-gap-identity}, and the stated sign condition follows.
\end{proof}

The infinite-order expansion supplies a complementary local certificate.
If, for adjacent ranks, the remainder in
\eqref{eq:ordered-small-risk} obeys
\(|R_i(\zeta)|\leq M_i\zeta^6\), then the exact adjacent ordering is
guaranteed wherever
\begin{equation}
 (b_{i+1}-b_i)+(c_{i+1}-c_i)\zeta^2
   >(M_i+M_{i+1})\zeta^4.
 \label{eq:series-ranking-certificate}
\end{equation}
Thus the fourth-order coefficient explains the onset and direction of the
small-to-intermediate ranking, while higher orders bound the interval over
which that conclusion is rigorous.  The overlap identity continues to apply
outside the Taylor disk.

\begin{observation}[Scope of the middle-regime result]
The isotropic-subspace result is an exact ensemble theorem.  For a general
unequal-spike matrix \(H\), \eqref{eq:overlap-gap-identity} is an exact
sufficient criterion, but endpoint equalization alone does not imply its
hypotheses, unimodality of \(\Delta_{\mathrm{rank}}\), or a single connected
visibility interval.  The unequal-spike experiment below verifies the
criterion's mechanism and supplies a simultaneous \(99\%\) Monte Carlo
confidence certificate for positive expected risk gaps at one representative
middle-noise point.

The visibility crossings depend on \(\delta_{\mathrm{vis}}\) and on an
ensemble-averaged normalized risk.  They are therefore neither the
fixed-realization Taylor radius \(\rho_q\) nor an exact phase-transition or
BBP threshold.  The formulas are asymptotic crossing approximations unless
the omitted remainders are separately bounded.
\end{observation}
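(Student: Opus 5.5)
The plan is to treat the Observation as a set of separate claims and prove each one from an earlier result or from a short logical argument. No new analysis of the ensemble is needed.

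\emph{The two positive claims.} The first claim is that the isotropic result is an exact ensemble theorem. This is a restatement of \Cref{thm:isotropic-middle-ranking}: its proof uses only the exact Wishart reduction \eqref{eq:noisy-Wishart-reduction} and the column-swap pairing, and no truncated expansion. So the strict order \eqref{eq:isotropic-strict-risk-order} holds at every finite $\sigma>0$ without any $L^1$ remainder assumption. The second claim is that \eqref{eq:overlap-gap-identity} is an exact sufficient criterion. This follows from \Cref{prop:overlap-ranking}: the identity is a summation by parts of \eqref{eq:ordered-population-Rayleigh} and holds for every $H$ and every $\sigma$. Nonnegativity of all cumulative overlap differences, with one strict difference across a positive gap $\beta_\ell-\beta_{\ell-1}$, then forces a positive gap. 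It is only sufficient, because every factor $\beta_\ell-\beta_{\ell-1}\ge0$ but the inner sums may have mixed signs and still sum to a positive number.

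\emph{The non-implications.} Next I would show that endpoint equalization does not imply the overlap hypotheses, unimodality of $\Delta_{\mathrm{rank}}$, or a single visibility interval. The point is that \Cref{prop:small-noise-equalization,prop:large-noise-equalization} determine only the germs of the expected risks as $\zeta\downarrow0$ and $\zeta\to\infty$, through $a,b_i,c_i$ and $c_\infty,d_i^{(\infty)}$. These germs leave the risks on any compact interval $[\zeta_1,\zeta_2]\subset(0,\infty)$ unconstrained. Under the paper's generic hypotheses, both propositions hold for every full-rank $H$. I would then invoke the remark before \Cref{prop:overlap-ranking}: for three or more distinct spikes, the $B$- and $C_\sigma$-Rayleigh differences can disagree in sign. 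So the pairing mechanism gives no sign control on $\Omega_{jk}-\Omega_{ik}$, and the cumulative inequalities are not a consequence of the endpoint laws. For unimodality and connectedness, I would argue that the endpoint expansions agree with any continuous $\Delta_{\mathrm{rank}}$ having the prescribed germs. Consequently $\mathcal C_{\mathrm{vis}}$ in \eqref{eq:rank-crossover-set} may have several components, which is why its definition already requires components to be reported separately.

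\emph{The main obstacle} is turning these non-implications into explicit Gaussian ensembles with multiple bumps. I would not attempt that. The Observation only claims that the implication is not established, so the argument needs to show that the stated hypotheses do not logically contain the conclusions, not to produce a counterexample.

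\emph{Crossings versus other thresholds.} The last paragraph of the Observation separates the crossings from other thresholds. I would observe that $\sigma_{\mathrm{lo}}$ and $\sigma_{\mathrm{hi}}$ depend continuously and nontrivially on $\delta_{\mathrm{vis}}$, as shown by \eqref{eq:corrected-lower-crossover} and \eqref{eq:upper-crossover}, and that they are defined after the training expectation. By contrast, the Taylor radius $\rho_q$ is a fixed-realization quantity with no $\delta_{\mathrm{vis}}$ dependence, and the BBP threshold is a proportional-limit constant, also independent of $\delta_{\mathrm{vis}}$. Since the crossings vary with $\delta_{\mathrm{vis}}$ and these two thresholds do not, the crossings cannot coincide with either of them as functions of $\delta_{\mathrm{vis}}$.

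Finally, the crossing formulas came from solving truncated equations such as $\delta_{\mathrm{vis}}=C_2\zeta^2+C_4\zeta^4$ with the $O(\zeta^6)$ and $O(\zeta^{-6})$ terms dropped. They are therefore asymptotic approximations unless explicit remainder bounds are supplied, for example of the kind used in \eqref{eq:series-ranking-certificate}.
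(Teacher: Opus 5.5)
Your plan is correct and follows the paper's own justification of this observation, which is spread across \Cref{thm:isotropic-middle-ranking}, \Cref{prop:overlap-ranking}, the sign remark after \eqref{eq:unequal-angular-tilt}, the multi-component caveat after \eqref{eq:rank-crossover-set}, and the $\delta_{\mathrm{vis}}$-dependence and ensemble-versus-realization distinctions; the one clause you never address is the experimental certificate, which the paper supports by the Hoeffding--union bound \eqref{eq:middle-Hoeffding-certificate} (radius $0.002130$, below the smallest tail margin $0.002382$) fed into \eqref{eq:overlap-gap-identity}, where only the population gaps $\beta_\ell-\beta_{\ell-1}$ with $\ell=5,6,7$ are nonzero. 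Also, what leaves the middle unconstrained is the finite list of endpoint coefficients, not the germs: under \eqref{eq:noisy-Wishart-reduction} with $\tau\geq m$, each $\varepsilon_{\mathrm{gen},i}^{\mathrm{ord}}-\sigma^2$ is real-analytic in $h_\star^2/\sigma^2$ on $[0,\infty)$, so its full germ at $\sigma=\infty$ would in fact determine it at every finite noise level.
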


\section{Convergence radius and exceptional points}

Analytic perturbation theory guarantees a convergent local expansion for an
isolated eigenvalue or isolated eigenvalue cluster \cite{kato1995}.  Let
\(\gamma\) be the smallest positive eigenvalue of \(A_0\).  A norm condition
such as
\begin{equation}
    \|\sigma A_1+\sigma^2A_2\|_2
      <\tfrac12\gamma
    \label{eq:sufficient-gap-check}
\end{equation}
is a convenient sufficient small-noise check, but it is not the exact
convergence radius.

To obtain the exact algebraic candidates, analytically continue the real
noise scale \(\sigma\) to \(s\in\C\), set
\(A(s)=A_0+sA_1+s^2A_2\), and define
\begin{equation}
    p(\lambda,s)=\det(\lambda I_m-A(s)),
    \qquad
    D(s)=\operatorname{disc}_{\lambda}p(\lambda,s).
    \label{eq:discriminant}
\end{equation}
A root \(s_\star\) of \(D\) is an exceptional-point candidate at which two
eigenvalue sheets meet.  However, not every collision limits a grouped
spectral projector.  A collision internal to the retained cluster, or
internal to its complement, does not destroy the separation between the two
groups.

When \(q_0>1\), the discriminant has an exact factor \(s^\nu\) caused by the
clean zero cluster.  This degeneracy is removable after the branches are
labelled by their first nonzero effective splitting.  Define
\begin{equation}
    D_{\mathrm{red}}(s)=D(s)/s^\nu,
    \qquad D_{\mathrm{red}}(0)\ne0,
\end{equation}
with \(\nu=0\) when no zero factor is present.

Label the eigenvalue sheets \(\lambda_a(s)\), and let \(\cI_q\) be the set
of the \(q\) retained labels.  If sheets \(\lambda_a\) and \(\lambda_b\)
collide at \(s_\star\), the projector convergence radius is
\begin{equation}
    \boxed{
    \rho_q=\min\left\{
       |s_\star|:
       D_{\mathrm{red}}(s_\star)=0,
       \ \bigl|\{a,b\}\cap\cI_q\bigr|=1
    \right\}.}
    \label{eq:rho-formula}
\end{equation}
Thus \(\rho_q\) is computed independently of the projector-series-error
curve.  The curve is a numerical validation of \eqref{eq:rho-formula}, not
the source of \(\rho_q\).

\begin{theorem}[Gauge-invariant convergence statement]
\label{thm:projector-radius}
Assume the boundary collisions are generic.  The spectral projector onto the
retained branch set \(\cI_q\) is analytic for \(|s|<\rho_q\), and its Taylor
series about zero converges to the exact SVD projector in that disk.  A
compatible analytic frame, including the recursions above, has truncation
error \(O(|s|^{K+1})\).  At \(|s|>\rho_q\), increasing \(K\) cannot make the
Taylor series converge to the SVD projector.
\end{theorem}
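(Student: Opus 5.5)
The plan is to write the retained spectral projector as a Riesz contour integral and continue it analytically in \(s\) until the contour cannot be maintained. For real \(\sigma\) near zero, the retained eigenvalues \(\{\lambda_a(\sigma):a\in\cI_q\}\) are separated from the rest of the spectrum of \(A(\sigma)\). Choose a contour \(\Gamma\) enclosing exactly the retained ones and set
\[
 P_q(s)=\frac{1}{2\pi i}\oint_\Gamma(\zeta I_m-A(s))^{-1}\,d\zeta .
\]
This is analytic wherever \(\Gamma\) stays in the resolvent set. Because \(A(s)\) is a polynomial matrix pencil, the eigenvalues are branches of the algebraic function defined by \(p(\lambda,s)=0\). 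Their only possible singularities are roots of \(D\). The factor \(s^\nu\) at \(s=0\) comes from the clean zero cluster and is removable for the grouped projector, by the labelling through \(W\) in \eqref{eq:effective-splitting}; so only zeros of \(D_{\mathrm{red}}\) need to be considered.

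First I would show analyticity on the disk \(|s|<\rho_q\). Inside this disk the two groups of sheets never meet, since by \eqref{eq:rho-formula} any collision there is either internal to \(\cI_q\) or internal to its complement. For each \(s_0\) in the disk I would take a local contour around the retained group. The continuation is single-valued along paths in the disk because the disk is simply connected. Internal collisions permute labels within a group but leave the group's total projector unchanged, and the total projector is a symmetric function of the group. The local Riesz projectors therefore patch together into one analytic \(P_q\) on the disk, which agrees with \(\widehat P_q\) for small real \(\sigma\) as in \eqref{eq:analytic-projector-series}. Cauchy's theorem then gives convergence of the Taylor series on the disk, and uniqueness of Taylor coefficients identifies them with the recursion \eqref{eq:projector-recursion}.

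For a compatible analytic frame \(V(s)\) with \(V V^T=P_q\), I would argue from the construction in \Cref{prop:block-recursion}. There each \(V_k\) is a polynomial expression in \(A_1\), \(A_2\), \(G\) and the earlier coefficients, and this expression matches the Taylor coefficients of the frame. The partial sums therefore differ from the analytic frame by the Taylor tail, which is \(O(|s|^{K+1})\) on compact subdisks by the Cauchy estimates.

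The main step, and the obstacle, is the converse: beyond \(\rho_q\) no choice of \(K\) gives convergence. Let \(s_\star\) attain the minimum in \eqref{eq:rho-formula}. Genericity should mean that this is a simple square-root branch point (a Jordan-block exceptional point) at which one retained sheet meets one complementary sheet. The resolvent of the colliding pair then has a pole of order two at \(\lambda_\star\). As \(s\to s_\star\), the projector onto the retained member of the pair behaves like \(c\,(s-s_\star)^{-1/2}N+O(1)\) with \(N\neq0\) nilpotent, so \(P_q\) is unbounded near \(s_\star\). A power series that converged at some point with \(|s|>\rho_q\) would converge on a disk containing \(s_\star\), and its sum would be bounded there. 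That is a contradiction, so the radius of convergence is exactly \(\rho_q\).

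The delicate parts are justifying that the coefficient of \(N\) is nonzero under the genericity hypothesis, and excluding cancellation from other collisions on the same circle. I would handle these by a local Puiseux normal form of the \(2\times2\) reduced block, obtained from a Schur complement onto the colliding pair. Along any real segment beyond the disk, the partial sums then diverge and cannot approach the SVD projector.
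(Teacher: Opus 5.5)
The paper gives no written proof of this theorem. It rests on the remark citing Kato and on the discriminant characterization \eqref{eq:rho-formula}. Your Riesz-projector continuation, combined with a blow-up argument at the nearest boundary collision, is the natural way to supply a proof, and its overall structure is sound.

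The genuine gap is analyticity at \(s=0\) when \(q<q_0\). There all \(q_0\) zero-origin eigenvalues coincide at \(s=0\), and the retained ones are separated from the discarded ones only by \(O(\sigma^2)\). So no fixed contour encloses ``exactly the retained ones'' on a neighborhood of \(0\), and your opening construction fails at the center of the disk. Saying the factor \(s^\nu\) is removable ``by the labelling through \(W\)'' restates the claim; the paper also only asserts it. Since \eqref{eq:rho-formula} simply ignores this boundary collision at \(s=0\), something must show it is harmless.

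The missing idea is Kato's reduction process:
\begin{enumerate}
\item Take an analytic frame \(V(s)\) of the whole zero group, for instance the one of \Cref{prop:block-recursion}. This is legitimate because the total zero group is isolated from the positive spectrum.
\item Since \(\Lambda_0=\Lambda_1=0\) and \(\Lambda_2=W\), the matrix \(s^{-2}V(s)^TA(s)V(s)\) is analytic with value \(W\) at \(s=0\).
\item A Riesz projector \(\pi(s)\) of this \(q_0\times q_0\) matrix, around \(\{\mu_0,\dots,\mu_{q-1}\}\) and excluding \(\{\mu_q,\dots,\mu_{q_0-1}\}\), is analytic near \(0\). Then \(P_q(s)=V(s)\pi(s)V(s)^T\).
\end{enumerate}
This is exactly where \(\mu_{q-1}<\mu_q\) is needed. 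For \(q=q_0\) the zero group is isolated and your contour works as written.

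The remaining points are smaller.
\begin{itemize}
\item \emph{The real segment.} Agreement with \(\widehat P_q\) for small real \(\sigma\) does not yet give convergence to the SVD projector on all of \((-\rho_q,\rho_q)\). Add that on the real axis the spectrum is real. The retained group can then lose its bottom-\(q\) position only where a retained and a complementary sheet coincide, and that is a boundary root already counted in \(\rho_q\).
\item \emph{Path independence.} This should come from continuation along rays from \(0\), which is how the labels in \eqref{eq:rho-formula} must be read, followed by the monodromy theorem. Simple connectivity of the disk alone is not enough, because the individual sheets branch inside the disk.
\item \emph{The recursion.} Identifying the coefficients with \eqref{eq:projector-recursion} is valid only for \(q=q_0\).
\item \emph{The frame estimate.} This is only local. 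In the symmetric gauge, \(V=P_{q_0}Q_0(Q_0^TP_{q_0}Q_0)^{-1/2}\), and this can fail to be analytic inside \(|s|<\rho_{q_0}\) wherever \(Q_0^TP_{q_0}Q_0\) becomes singular. So your ``compact subdisks'' must be subdisks of the frame's own domain. This gauge dependence is why the theorem is stated for the projector.
\end{itemize}

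Finally, the converse is easier than you fear. Read genericity as a simple zero of \(D_{\mathrm{red}}\) at \(s_\star\). Then the restriction of \(A(s_\star)\) to the colliding pair cannot be \(\lambda_\star I_2\), since that would make the pair discriminant vanish to second order. Hence \(N=(A(s_\star)-\lambda_\star I_m)P_{ab}(s_\star)\neq0\) in the identity \(P_a-P_b=2\bigl(A-\tfrac12(\lambda_a+\lambda_b)I_m\bigr)P_{ab}/(\lambda_a-\lambda_b)\). Alternatively, continuing around \(s_\star\) replaces \(P_q\) by \(P_q-P_a+P_b\neq P_q\), which already rules out a single-valued extension without any blow-up estimate. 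Collisions at other points of \(|s|=\rho_q\) cannot cancel a singularity located at \(s_\star\).
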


\subsection{Radius of the generalization-risk series}

The scalar risk cannot have a singularity closer to zero than its projector,
because \eqref{eq:generalization-projector} is a linear functional of that
projector plus the entire polynomial \(q s^2\).  It may, however, have a
larger radius if that linear functional exactly cancels a projector
singularity.  At a simple exceptional point joining a target sheet \(a\) to
a complementary sheet \(b\), analytic continuation exchanges the two
projectors.  Define the scalar sheet difference
\begin{equation}
    \Delta_B(s)=\operatorname{tr}\!\left(
       B[P_a(s)-P_b(s)]\right).
\end{equation}
The exceptional point is invisible to the risk only in the special case
\(\Delta_B(s)\equiv0\) locally.  Equivalently, every half-integer term in
the local Puiseux expansion must cancel; cancellation of only its leading
coefficient is not enough.  Therefore
\begin{equation}
\boxed{
 \rho_{\mathrm{gen},q}=\min\!\left\{
 |s_\star|:\ D_{\mathrm{red}}(s_\star)=0,\
 |\{a,b\}\cap\cI_q|=1,\
 \Delta_B\not\equiv0
 \right\}.}
 \label{eq:generalization-radius}
\end{equation}
In particular, \(\rho_{\mathrm{gen},q}\geq\rho_q\), with equality
generically.

The generalization notebook uses the same realization as the projector
experiment in \texttt{seriesCheck2.ipynb}: random seed \(8\), \(m=6\),
\(n=1\), \(q_0=5\), and \(\tau=100\).  In particular,
\begin{equation}
H=(-0.49831520,-0.38318029,-0.39019345,-0.10079937,
   -0.66295623,-0.05415185)^T,
\end{equation}
and the two notebooks also share \(X\) and the standardized noise direction
\(\cE\).
Here \(B=HH^T\) and
\begin{equation}
    R_{\mathrm{gen},5}(s;\mathcal{T})
       =5s^2+\operatorname{tr}(BP_5(s)).
\end{equation}
The closest full-nullity target--complement exceptional point is
\begin{equation}
    s_{\star,5}=-1.16405872-0.43982812\,\mathrm{i},
    \qquad
    \boxed{\rho_{\mathrm{gen},5}=\rho_5
      =|s_{\star,5}|=1.24437996.}
    \label{eq:generalization-numerical-radius}
\end{equation}
The reduced generalization-risk plot uses the same second-order branch labels
and the same five radii reported later in \Cref{tab:rho-results}.  Thus the
projector and generalization experiments now differ only in the scalar error
functional plotted on the vertical axis.

The risk coefficient tail is nonzero at this branch point.  After removing
the generic \(k^{-1/2}\) factor, set \(h_k=\sqrt{k}\,g_{5,k}\) and fit
\begin{equation}
    h_k=a h_{k-1}+b h_{k-2},
    \qquad
    \widehat\rho_{\mathrm{gen},5}=\sqrt{-1/b}.
\end{equation}
The orders 800--1000 give the independent coefficient estimate
\(\widehat\rho_{\mathrm{gen},5}=1.24437676\).  This agrees with the algebraic
radius and confirms that the scalar trace does not cancel the closest
projector singularity.

The direct SVD comparison gives the following strict-series errors.
\begin{table}[htbp]
\centering
\begin{tabular}{ccccc}
\toprule
Noise level & Order 20 & Order 40 & Order 80 & Order 120\\
\midrule
\(0.8\rho_{\mathrm{gen},5}\) & \(1.62\times10^{-5}\) & \(1.91\times10^{-6}\)
 & \(1.17\times10^{-11}\) & \(1.33\times10^{-14}\)\\
\(1.2\rho_{\mathrm{gen},5}\) & \(4.92\times10^{-2}\) & \(2.62\times10^{1}\)
 & \(1.10\times10^{3}\) & \(2.49\times10^{7}\)\\
\bottomrule
\end{tabular}
\caption{Conditional population generalization-risk series errors below and
above the independently computed convergence radius.}
\label{tab:generalization-threshold}
\end{table}
At the value \(\sigma=2\) included in the notebook sweep, the exact
correction identity remains valid but the Taylor series must diverge because
\(2>\rho_{\mathrm{gen},5}\).

\begin{figure}[htbp]
    \centering
    \includegraphics[width=0.48\textwidth]
      {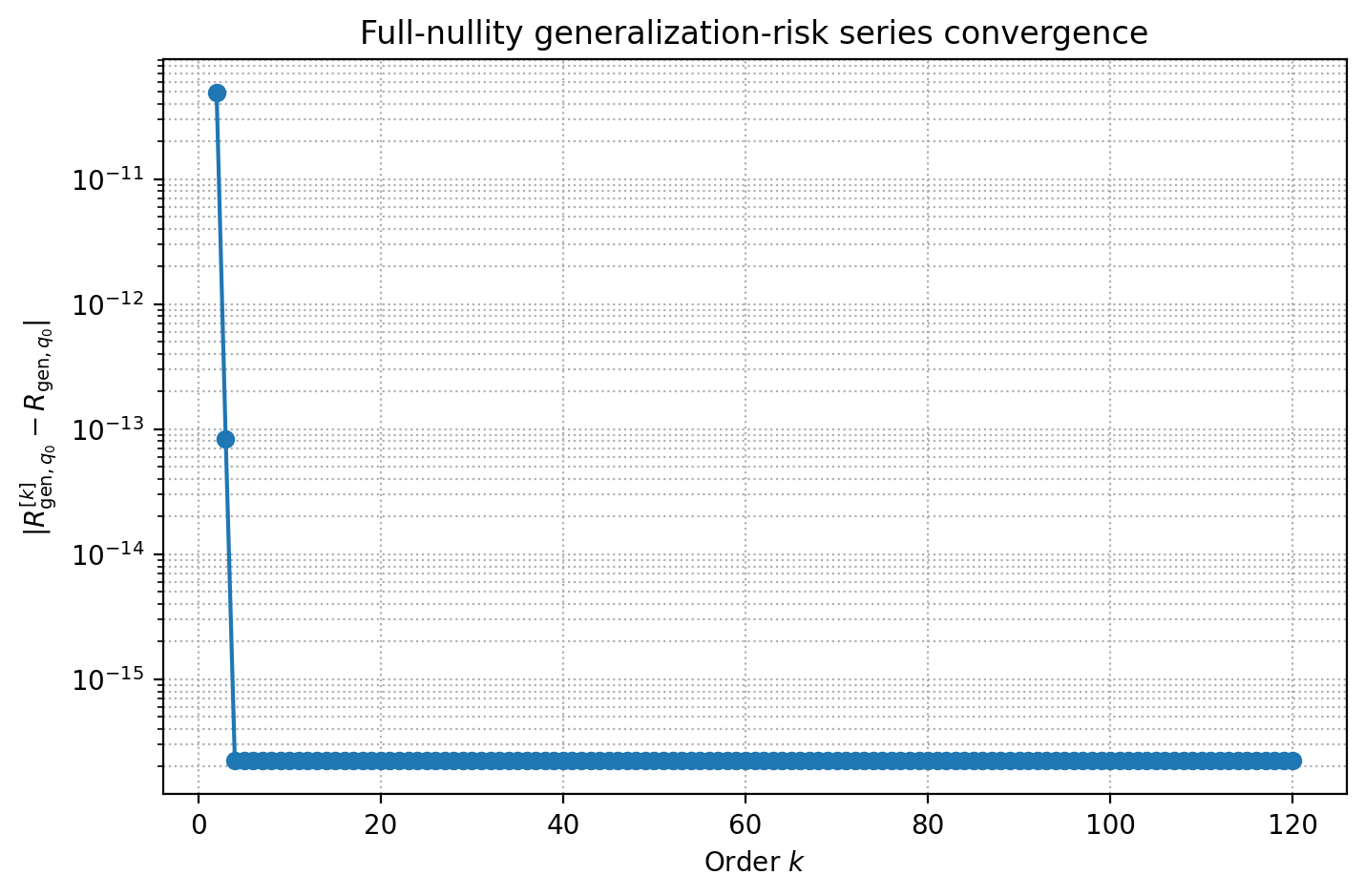}
    \hfill
    \includegraphics[width=0.48\textwidth]
      {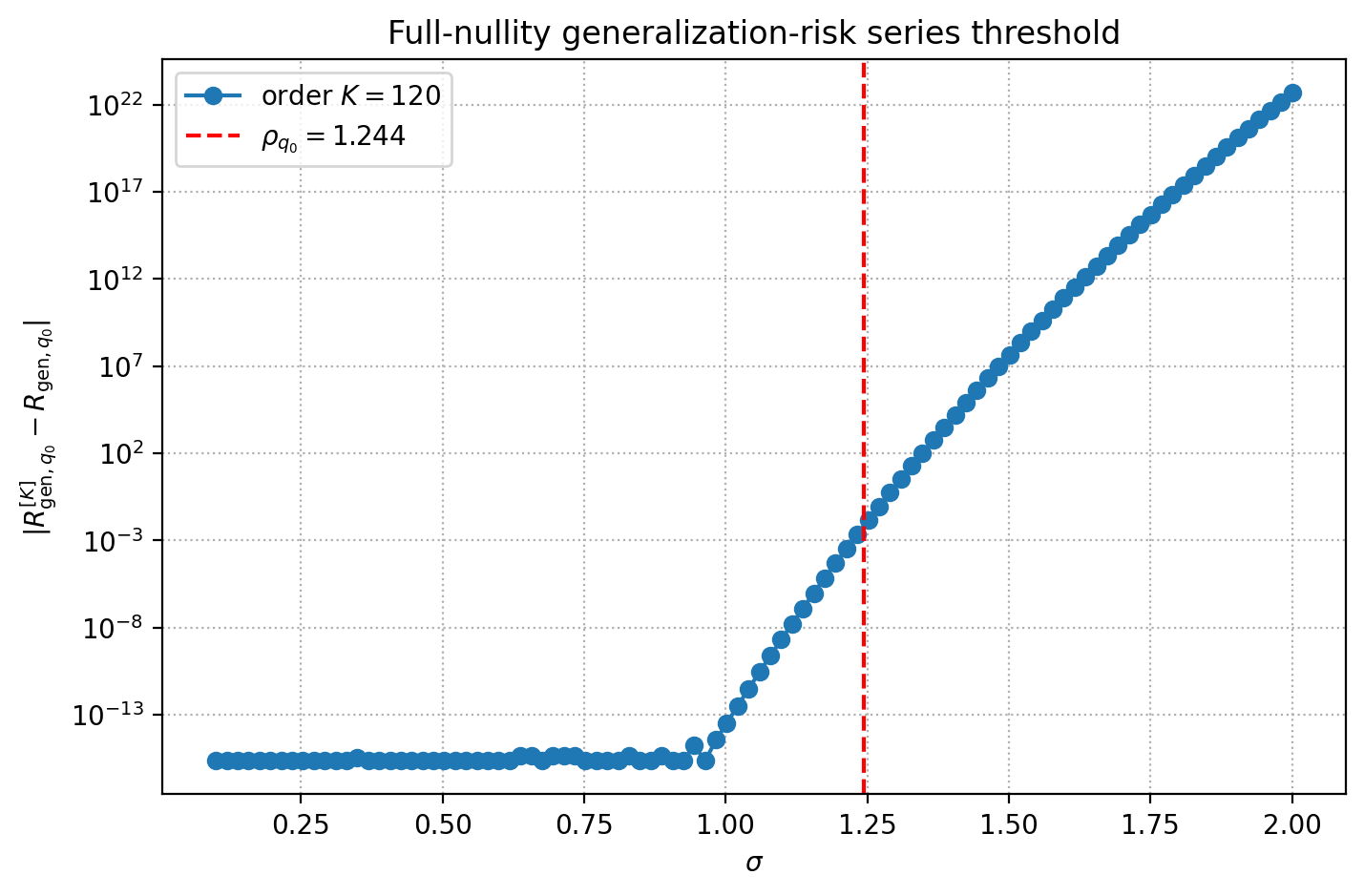}
    \caption{Full-nullity conditional generalization-risk series for the
    shared seed-8 realization.  Left: convergence in order at
    \(\sigma=10^{-3}\).  Right: the order-120 error
    \(\delta_{\mathrm{gen},q_0}^{[120]}\) over \(0.1\leq\sigma\leq2\),
    with the independently computed projector radius \(\rho_{q_0}\).}
    \label{fig:generalization-convergence-threshold}
\end{figure}

\begin{figure}[htbp]
    \centering
    \includegraphics[width=0.82\textwidth]
      {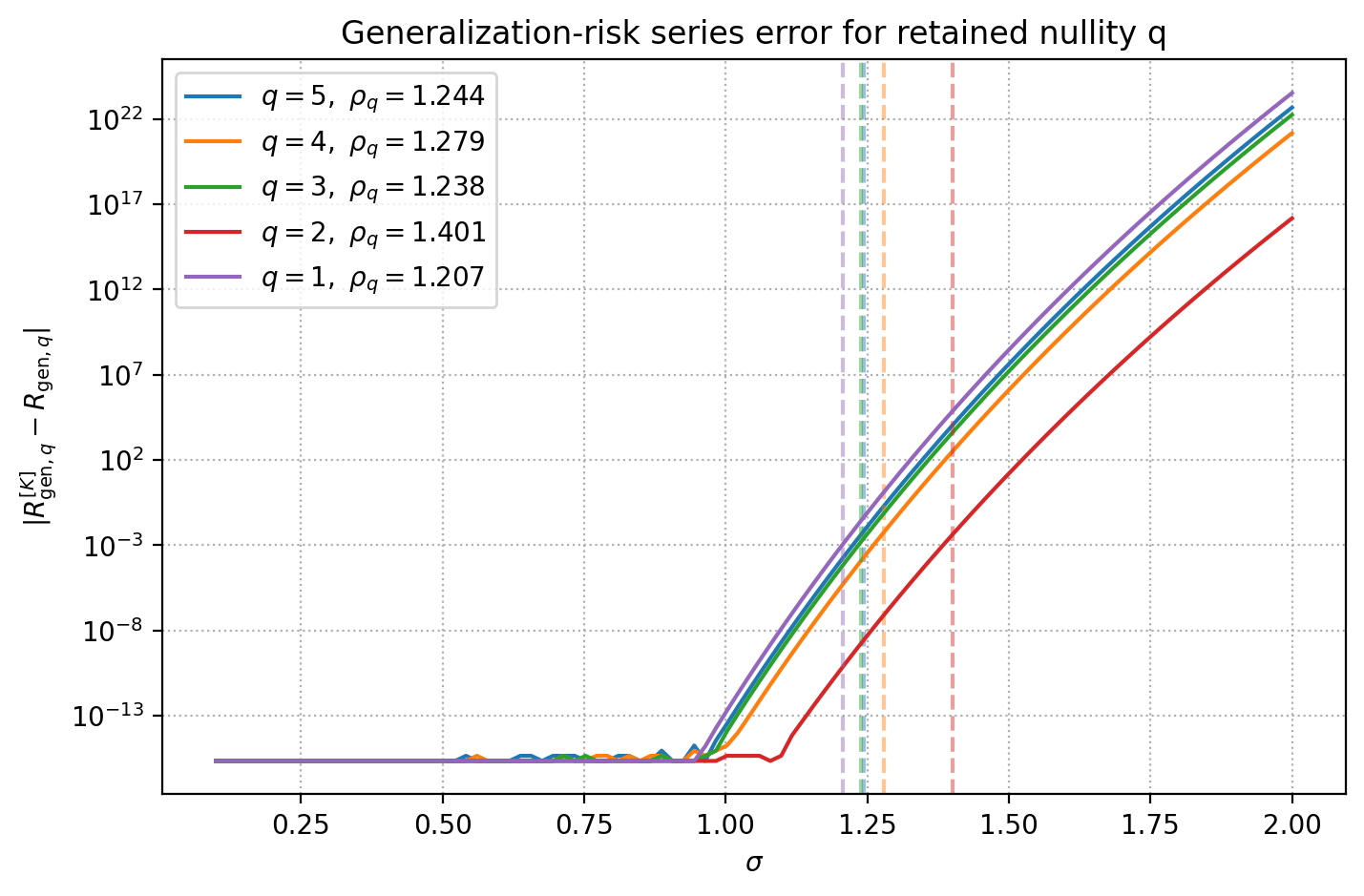}
    \caption{Order-120 conditional generalization-risk series error
    \(\delta_{\mathrm{gen},q}^{[120]}\) for each retained nullity.  The
    dashed lines are the independently computed projector radii \(\rho_q\).
    The observed growth at the same boundaries provides numerical evidence
    that, for this realization, no plotted scalar risk cancels its limiting
    projector singularity.}
    \label{fig:generalization-reduced-thresholds}
\end{figure}

\section{Reduced nullity}

Let
\begin{equation}
    W=\Phi\operatorname{diag}(\mu_0,\ldots,\mu_{q_0-1})\Phi^T,
    \qquad
    \mu_0\leq\cdots\leq\mu_{q_0-1},
    \label{eq:W-decomposition}
\end{equation}
where \(W\) is given by \eqref{eq:effective-splitting}, and write
\(\Phi_q=[\V{\phi}_0,\ldots,\V{\phi}_{q-1}]\).  For \(1\leq q<q_0\), if
\(\mu_{q-1}<\mu_q\), the bottom-\(q\) SVD subspace has the unique small-noise
limit
\begin{equation}
    Q_q=Q_0\Phi_q,
    \qquad
    \widehat P_q(\sigma)=Q_qQ_q^T+O(\sigma).
    \label{eq:reduced-limit}
\end{equation}
For \(q=q_0\), simply take \(\Phi_{q_0}=\Phi\) and
\(Q_{q_0}=Q_0\Phi\), which spans
the same complete null space as \(Q_0\).
The values \(\mu_j\) label the zero-origin branches because
\begin{equation}
    \lambda^{(j)}(\sigma)=\sigma^2\mu_j+O(\sigma^3).
\end{equation}

Reducing nullity from \(q\) to \(q-1\) moves one branch from the retained
cluster to its complement.  An old limiting collision can become internal,
but a previously internal collision can also become a new boundary
collision.  Consequently,
\begin{equation}
    \rho_{q-1}\ \text{may be greater or smaller than}\ \rho_q;
\end{equation}
there is no general monotonicity theorem.  Also, the full-nullity recursion
in \Cref{prop:block-recursion} cannot simply be called with an incomplete
clean null basis.  Reduced projectors must first be selected by
\eqref{eq:W-decomposition} and then continued as a spectral cluster.
Reducing \(q\) changes the estimated subspace; it extends the convergence
range of a different, lower-dimensional target and does not repair the
original full-nullity projector.

\section{Numerical verification}
\label{sec:numerics}

All values in this section are generated by the experiment notebooks and
their companion library functions.  Subspace-recursion errors are reported using
the Frobenius distance between orthogonal projectors, so they do not depend
on a rotation or sign chosen for the singular vectors.  Vector and scalar
risk errors are labelled separately.

\subsection{Compact formula}

We first test \Cref{prop:compact} on a problem satisfying its
one-dimensional-null-space assumption.  We use \(m=6\), \(\tau=50\),
\(\sigma=0.2\), random seed \(17\), and construct \(Z\) with
\(\operatorname{rank}(Z)=5\).  The supplied smallest noisy eigenvalue is
\begin{equation}
    \lambda_{\mathrm{SVD}}=1.76378905836.
\end{equation}
The comparison in \Cref{tab:compact-check} confirms both the compact formula
and the need for \(M=GEE^T\).

\begin{table}[htbp]
\centering
\small
\begin{tabular}{lcc}
\toprule
Definition of \(M\) & Vector difference from SVD & Eigenvector residual\\
\midrule
\(M=(ZZ^T)^+EE^T\) & \(4.51\times10^{-16}\) & \(1.47\times10^{-14}\)\\
\(M=(ZZ^T)EE^T\)   & \(3.28\times10^{-1}\)  & \(1.88\times10^{1}\)\\
\bottomrule
\end{tabular}
\caption{Verification of the exact compact reconstruction.  The correct
formula has zero normalization error to machine precision and
\(\operatorname{cond}(\mathcal{Q})=1.31077\).}
\label{tab:compact-check}
\end{table}

The notebook also performs an a-posteriori branchwise check in the
multiple-nullity example at \(\sigma=10^{-3}\).  It obtains
\begin{equation}
    \|\V{\epsilon}_{\mathrm{SVD}}\|_2
      =3.33314836\times10^{-5},
    \quad
    \|\V{\epsilon}_{\mathrm{compact}}
       -\V{\epsilon}_{\mathrm{SVD}}\|_2
      =2.19\times10^{-16},
\end{equation}
with eigenvector residual \(9.12\times10^{-15}\).  Because the clean vector
in that check is matched to the noisy branch after the SVD, the independent
simple-nullity test above is the direct validation of the proposition.

\subsection{Infinite-order convergence}

For the multiple-nullity experiment, we use
\begin{equation}
    m=6,\qquad n=1,\qquad \tau=100,
\end{equation}
and random seed \(8\).  Draw \(H_0\), \(X\), and \(\cE\) with independent
standard-normal entries, and set \(H=H_0/\|H_0\|_F\).  Thus \(Z=HX\),
\(E(\sigma)=\sigma\cE\), \(\widetilde Z=Z+E(\sigma)\), and \(q_0=5\).

At \(\sigma=10^{-3}\), the projector-series errors for recursion orders
\(1,2,3,4\), and \(120\) are
{\small
\begin{equation}
\begin{array}{c|ccccc}
K&1&2&3&4&120\\ \hline
\|P^{[K]}_5-\widehat P_5\|_F
&3.7381\!\times\!10^{-7}
&1.0869\!\times\!10^{-10}
&1.2732\!\times\!10^{-13}
&1.4551\!\times\!10^{-15}
&1.4536\!\times\!10^{-15}
\end{array}
\end{equation}
}
so four orders reach floating-point accuracy.

\begin{figure}[htbp]
    \centering
    \includegraphics[width=0.72\textwidth]{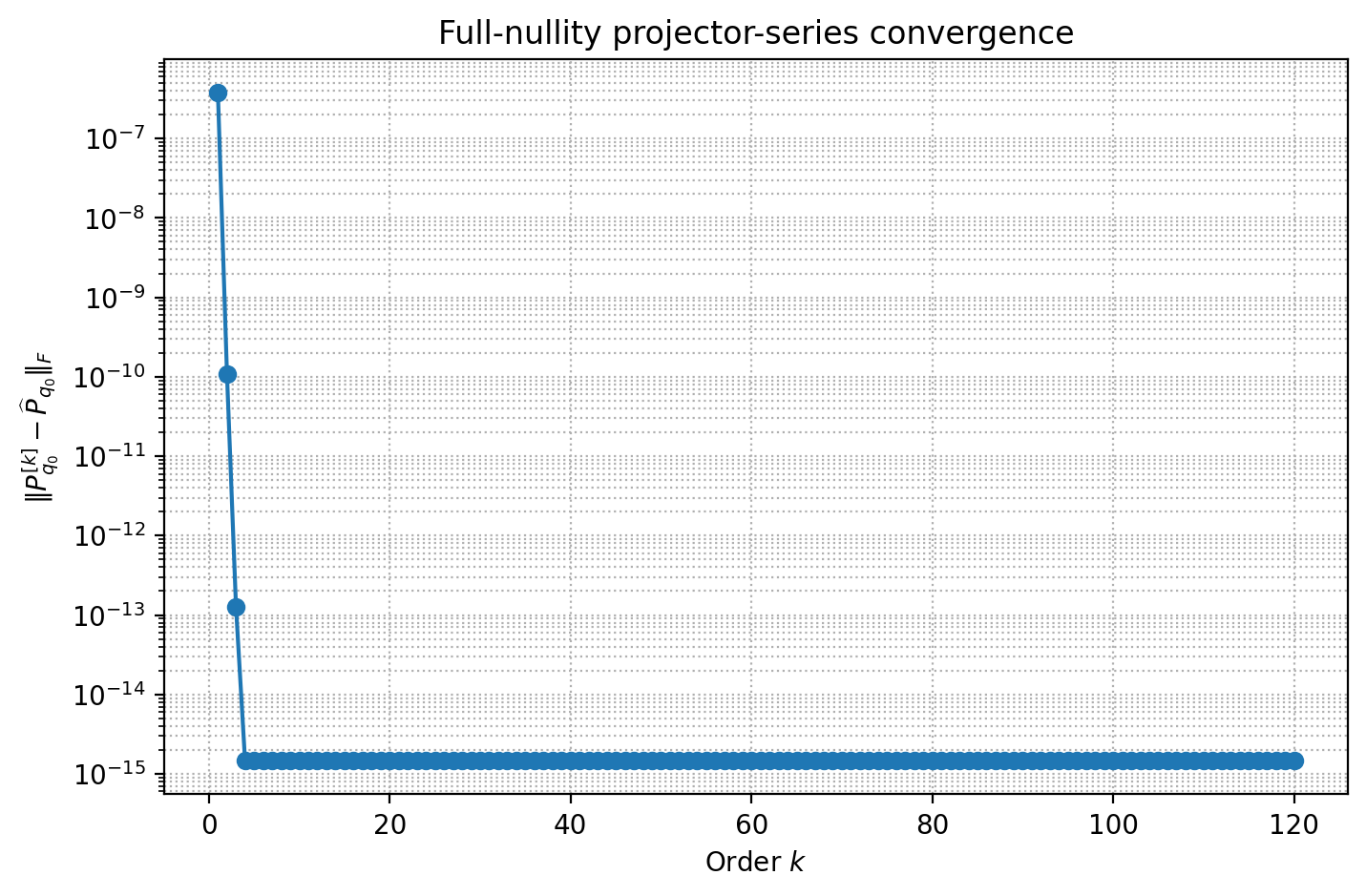}
    \caption{Convergence of the block recursion to the exact full-nullity SVD
    projector at \(\sigma=10^{-3}\).}
    \label{fig:order-convergence}
\end{figure}

\subsection{Ordered-branch generalization regimes}

This ensemble experiment is separate from the fixed seed-8 study of
convergence radii.  Set
\begin{equation}
 m=7,\qquad n=3,\qquad q_0=4,\qquad \tau=100,
 \label{eq:rank-experiment-size}
\end{equation}
draw \(H_0\) with seed 8, and use
\begin{equation}
 H=\frac{2H_0}{\|H_0\|_F},
 \qquad h_\star=\|H\|_2=1.57970420.
 \label{eq:rank-experiment-H}
\end{equation}
For each noise value, the notebook averages over $100{,}000$ independent
training realizations.  A calibration run with seed 73119 uses
$0.15\leq\zeta\leq0.45$ and $5\leq\zeta\leq10$; an independent
validation run with seed 20260803 uses 121 logarithmically spaced values
over $10^{-2}\leq\zeta\leq10$.  Both risk profiles are normalized only
after their Monte Carlo expectations are computed.

\begin{figure}[htbp]
 \centering
 \includegraphics[width=0.48\textwidth]
   {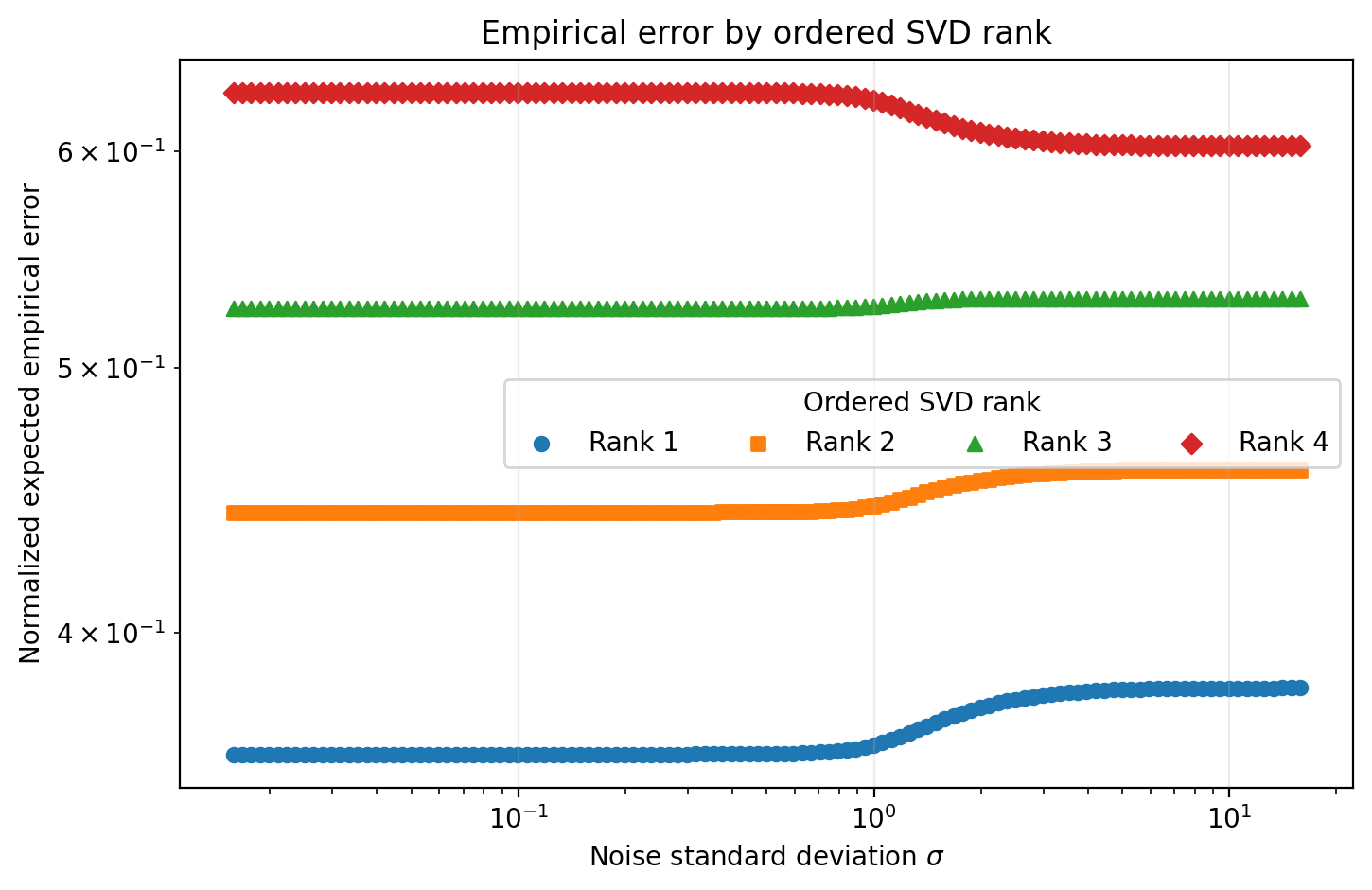}
 \hfill
 \includegraphics[width=0.48\textwidth]
   {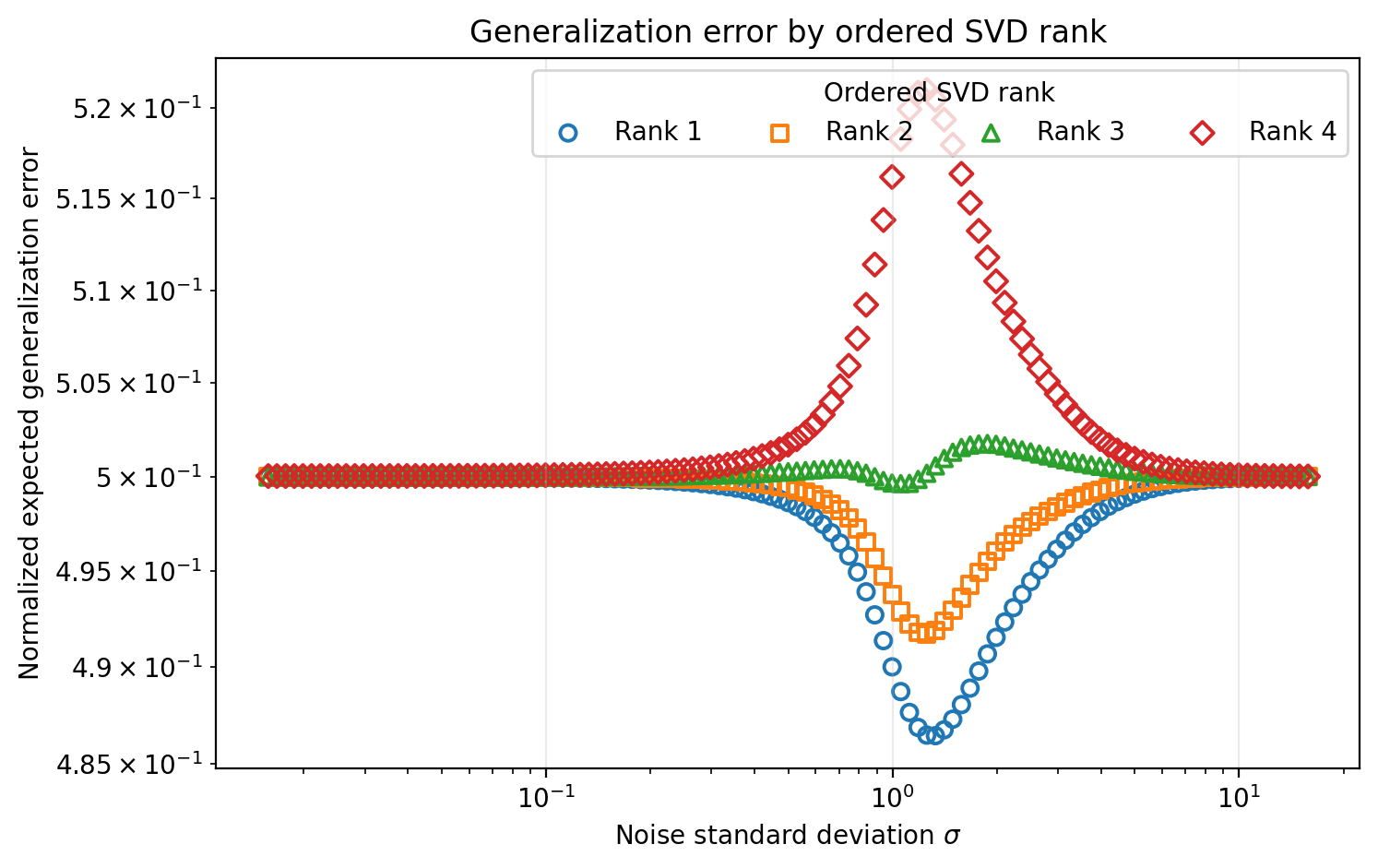}
 \caption{Normalized expected branch risks.  Left: empirical risks are
 ordered exactly, and their second-order separation is governed by \(W\) as
 proved in \Cref{prop:empirical-ranking}.  Right: generalization
 risks are nearly equal at the two noise endpoints and visibly separated at
 intermediate noise.  The marker label ``Rank \(i+1\)'' denotes the ordered
 SVD rank, not an intrinsic dimension of a null vector.}
 \label{fig:ordered-branch-risks}
\end{figure}

The signal covariance in this experiment is not isotropic: its nonzero
eigenvalues, in increasing order, are
\begin{equation}
 (\beta_5,\beta_6,\beta_7)
  =(0.702381824,\ 0.802152806,\ 2.495465370).
 \label{eq:rank-experiment-population-eigenvalues}
\end{equation}
We therefore checked the unequal-spike mechanism in
\Cref{prop:overlap-ranking} directly.  A dedicated run with seed 20260803,
\(3{,}000{,}000\) independent trials, and \(\zeta=1\) gave the following
Monte Carlo estimates of the structural risks and cumulative
population-overlap tails.  The last three columns are respectively the
overlap with all three signal modes, the top two modes, and the top mode.
\begin{table}[H]
\centering
\small
\begin{tabular}{ccccc}
\toprule
SVD rank & \(\widehat{\mathbb E}[\widehat{\V{u}}_{(i)}^TB
 \widehat{\V{u}}_{(i)}]\)
 & \(\sum_{k=5}^7\widehat\Omega_{ik}\)
 & \(\sum_{k=6}^7\widehat\Omega_{ik}\)
 & \(\widehat\Omega_{i7}\)\\
\midrule
1 & 0.131298 & 0.13838 & 0.07337 & 0.01582\\
2 & 0.160640 & 0.17201 & 0.09029 & 0.01820\\
3 & 0.204187 & 0.22247 & 0.11585 & 0.02148\\
4 & 0.283649 & 0.31508 & 0.16321 & 0.02720\\
\bottomrule
\end{tabular}
\caption{Monte Carlo estimates of signal leakage and overlap tails at the
representative middle-noise point \(\sigma=\|H\|_2\).  Every estimated tail
increases with ordered sample rank; the simultaneous certificate below and
\eqref{eq:overlap-gap-identity} then establish the expected-risk ordering.}
\label{tab:middle-overlap-certificate}
\end{table}

The sampled adjacent structural-risk gaps are
\begin{equation}
 (0.029342,\ 0.043548,\ 0.079461).
 \label{eq:middle-risk-gaps}
\end{equation}
This conclusion is not driven by a few pointwise ordered samples: only
about \(10.1\%\) in a separate \(200{,}000\)-trial check had all four
structural risks ordered.  The order therefore appears after expectation,
not pointwise.  For a distribution-free certificate of the overlap
criterion, let \(D_{t,i,\ell}\) be trial \(t\)'s difference between ranks
\(i+1\) and \(i\) in one of the three tail-overlap columns of
\Cref{tab:middle-overlap-certificate}.  There are \(K=9\) adjacent tail
margins and each \(D_{t,i,\ell}\in[-1,1]\).  One-sided Hoeffding bounds and a
union bound give, with probability at least \(1-\alpha\),
\begin{equation}
 \mathbb E D_{i,\ell}
 \geq \widehat D_{i,\ell}
  -\sqrt{\frac{2}{N}\log\!\frac{K}{\alpha}}
 \quad\text{simultaneously for all }(i,\ell).
 \label{eq:middle-Hoeffding-certificate}
\end{equation}
For \(N=3{,}000{,}000\) and \(\alpha=0.01\), the subtraction radius is
0.002130.  The smallest sampled tail margin is 0.002382, leaving a positive
simultaneous lower endpoint 0.000252.  Thus, with at least \(99\%\)
confidence, all nine population tail inequalities hold.
\Cref{prop:overlap-ranking} then proves all three positive expected-risk gaps
at this representative middle-noise point; the direct gaps are reported in
\eqref{eq:middle-risk-gaps}.

The coefficients calibrated from the raw expected generalization risks are
\begin{equation}
 C_2=0.024350,\qquad C_4=0.041545,\qquad C_H=0.226569.
 \label{eq:rank-fitted-coefficients}
\end{equation}
For \(\delta_{\mathrm{vis}}=10^{-3}\), the independent validation curve gives
\begin{equation}
 [\sigma_{\mathrm{lo}},\sigma_{\mathrm{hi}}]_{\mathrm{measured}}
   =[0.3102,\,6.0161].
 \label{eq:rank-measured-interval}
\end{equation}
The older finite-window linear fit used by the notebook gives 0.2363.  This
coefficient is not the asymptotic \(C_2\) in
\eqref{eq:rank-fitted-coefficients}: using that \(C_2\) alone gives 0.3201.
For this experiment \(C_4>0\), and including it moves the lower prediction
to 0.3101.  Together with the high-noise formula this gives
\begin{equation}
 [\sigma_{\mathrm{lo}},\sigma_{\mathrm{hi}}]_{\mathrm{predicted}}
   =[0.3101,\,6.1288],
 \label{eq:rank-predicted-interval}
\end{equation}
with relative endpoint errors $0.04\%$ and $1.87\%$, respectively.

\begin{figure}[htbp]
 \centering
 \includegraphics[width=0.76\textwidth]
   {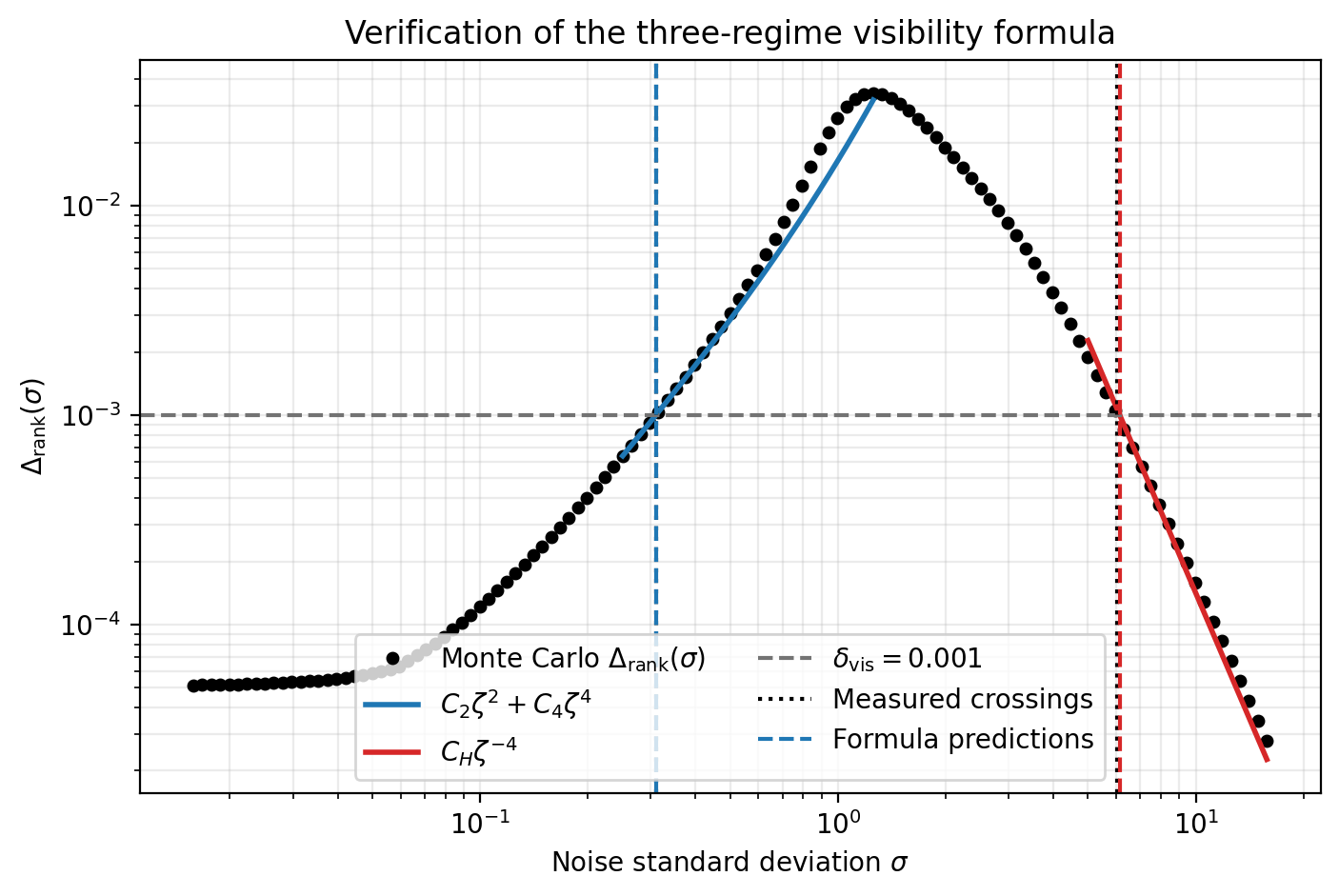}
 \caption{Monte Carlo rank spread \(\Delta_{\mathrm{rank}}\), its calibrated
 low-noise law \(C_2\zeta^2+C_4\zeta^4\), and its calibrated high-noise law
 \(C_H\zeta^{-4}\).  Vertical lines compare the measured and predicted
 crossings of \(\delta_{\mathrm{vis}}=10^{-3}\).}
 \label{fig:rank-threshold-verification}
\end{figure}

The calibration validates the asymptotic powers and the corrected crossing
formula out of sample; it is not a parameter-free derivation of
\(C_2,C_4,C_H\).  At the smallest finite plotted noise there is a genuine
population separation of order \(C_2\zeta^2\), superimposed on Monte Carlo
and floating-point error; \eqref{eq:rank-small-spread} forces only its
\(\zeta\downarrow0\) limit to be zero.
For an isotropic signal subspace the rank order is exact by
\Cref{thm:isotropic-middle-ranking}; for this unequal-spike matrix the overlap
tails explain the order and \eqref{eq:middle-Hoeffding-certificate} certifies
it at the representative middle-noise point.

\subsection{Exceptional-point and reduced-nullity results}

For the same realization, the second-order lifted eigenvalues are
\begin{equation}
    (\mu_0,\ldots,\mu_4)
    =(72.2180,\ 78.9868,\ 102.1430,\ 113.0325,\ 146.6655).
    \label{eq:numerical-W}
\end{equation}
They are distinct, so every reduced small-noise limit in
\eqref{eq:reduced-limit} is unambiguous.  The roots of
\eqref{eq:discriminant} were computed by a rank-preserving rationalization of
the compact SVD factors to eight digits, followed by numerical root finding
and branch continuation from the \(W\)-labels.  The reported values are
therefore algebraic--numerical estimates for the floating-point realization.
One member of each conjugate pair is reported in \Cref{tab:rho-results};
labels \(0,\ldots,4\) originate in the clean null space, while label \(5\) is
the clean positive-eigenvalue branch.

\begin{table}[htbp]
\centering
\small
\begin{tabular}{ccccc}
\toprule
Retained nullity \(q\) & \(\Re s_{\star,q}\) & \(\Im s_{\star,q}\)
& \(\rho_q=|s_{\star,q}|\) & Collision\\
\midrule
5 & \(-1.16405872\) & \(-0.43982812\) & \(1.24437996\) & \((4,5)\)\\
4 & \(-1.08848353\) & \(-0.67174264\) & \(1.27907567\) & \((3,5)\)\\
3 & \(-1.21293931\) & \(-0.24914546\) & \(1.23826299\) & \((2,3)\)\\
2 & \(-0.97696847\) & \(-1.00453024\) & \(1.40126671\) & \((0,5)\)\\
1 & \(-0.25176277\) & \(-1.18001336\) & \(1.20657201\) & \((0,1)\)\\
\bottomrule
\end{tabular}
\caption{Exceptional points crossing the boundary between the retained
cluster and its complement.  The thresholds are computed from \(Z\) and
\(\cE\), not fitted from the projector-series-error curves.}
\label{tab:rho-results}
\end{table}

At \(\sigma=1.26\), which lies between \(\rho_5\) and \(\rho_4\), the
full-nullity errors at orders \(20,40,80,120\) are
\begin{equation}
    (0.0546,\ 0.0601,\ 0.0558,\ 0.0710),
\end{equation}
while the retained-nullity-four errors are
\begin{equation}
    (0.0346,\ 0.0192,\ 0.00754,\ 0.00342).
\end{equation}
Thus the \(q=4\) series converges at this noise level whereas the original
\(q=5\) Taylor series does not.

For each \(q\), Taylor coefficients of the selected projector were recovered
by a \(1024\)-point Cauchy--Fourier calculation on the circle
\(|s|=0.95\rho_q\).  The order-120 error is
\begin{equation}
    d_q^{[120]}(\sigma)
      =\|P_q^{[120]}(\sigma)-\widehat P_q(\sigma)\|_F.
\end{equation}

\begin{figure}[htbp]
    \centering
    \includegraphics[width=0.82\textwidth]{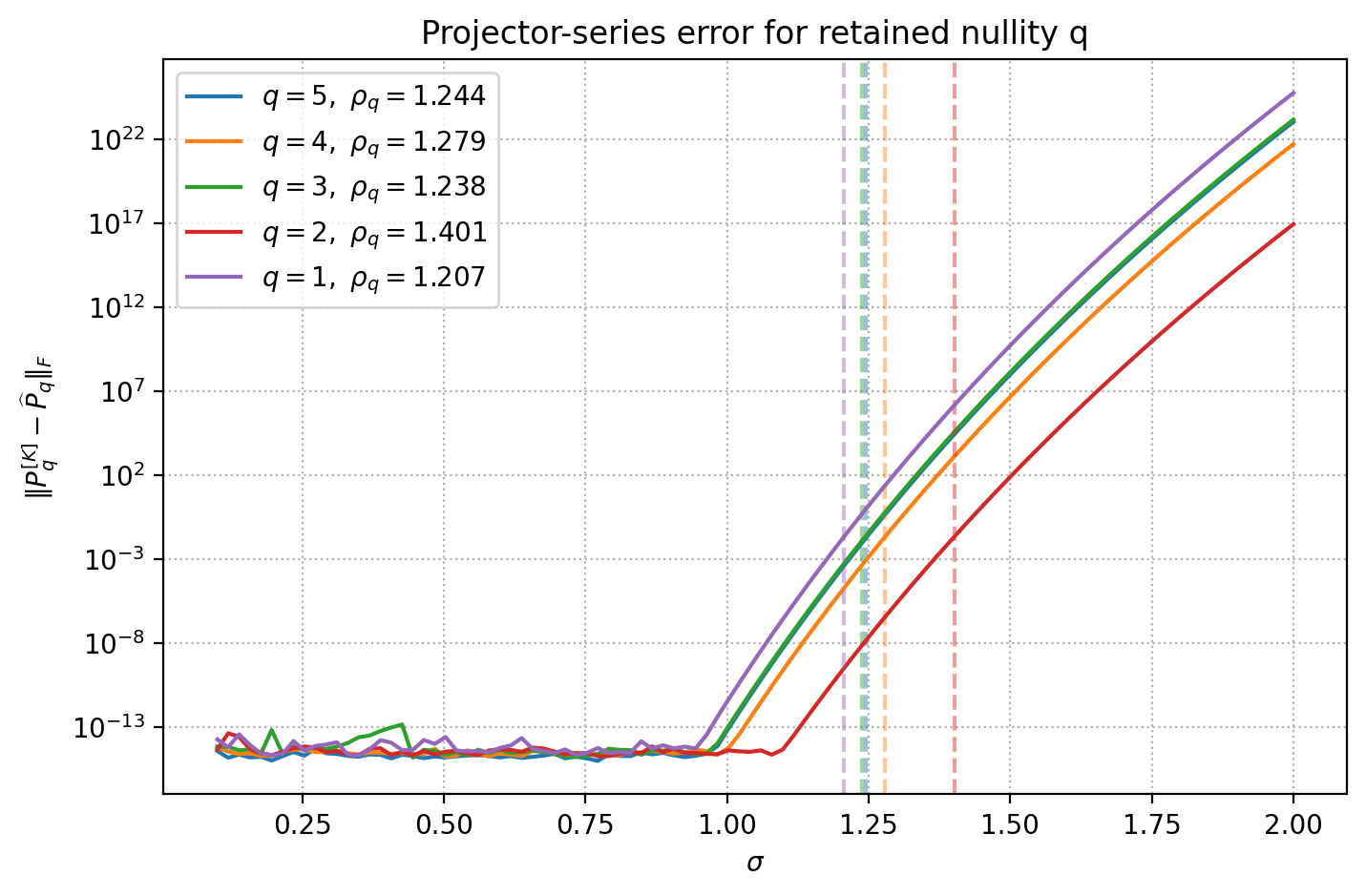}
    \caption{Order-120 projector-series error for the \(q\) smallest left
    singular-vector subspaces.  Dashed lines are the independently computed
    radii \(\rho_q\).  Moving the boundary between the retained cluster and
    its complement changes the collision that limits convergence.}
    \label{fig:reduced-thresholds}
\end{figure}

\begin{table}[htbp]
\centering
\small
\setlength{\tabcolsep}{4pt}
\resizebox{\textwidth}{!}{%
\begin{tabular}{cccccc}
\toprule
\(\sigma\) & \(q=5\) & \(q=4\) & \(q=3\) & \(q=2\) & \(q=1\)\\
\midrule
1.00
&\(5.56\times10^{-14}\)&\(6.37\times10^{-15}\)&\(7.87\times10^{-14}\)
&\(3.24\times10^{-15}\)&\(3.06\times10^{-12}\)\\
1.20
&\(1.99\times10^{-4}\)&\(9.46\times10^{-6}\)&\(2.72\times10^{-4}\)
&\(1.58\times10^{-10}\)&\(1.07\times10^{-2}\)\\
1.26
&\(7.10\times10^{-2}\)&\(3.39\times10^{-3}\)&\(9.74\times10^{-2}\)
&\(5.68\times10^{-8}\)&\(3.84\times10^{0}\)\\
1.35
&\(2.89\times10^{2}\)&\(1.38\times10^{1}\)&\(3.97\times10^{2}\)
&\(2.32\times10^{-4}\)&\(1.56\times10^{4}\)\\
1.40
&\(2.31\times10^{4}\)&\(1.10\times10^{3}\)&\(3.17\times10^{4}\)
&\(1.86\times10^{-2}\)&\(1.25\times10^{6}\)\\
1.50
&\(9.42\times10^{7}\)&\(4.49\times10^{6}\)&\(1.29\times10^{8}\)
&\(7.57\times10^{1}\)&\(5.06\times10^{9}\)\\
\bottomrule
\end{tabular}}
\caption{Order-120 Frobenius projector-series errors.  Each series is accurate
inside its corresponding radius and grows rapidly after that radius is
crossed.}
\label{tab:order120-errors}
\end{table}

The results make four points precise.
\begin{enumerate}
    \item The full-nullity threshold is
    \(\rho_5=1.24437996\), corresponding to
    \(s_\star=-1.16405872-0.43982812\,\mathrm{i}\).
    \item Reducing nullity from \(5\) to \(4\) increases the radius to
    \(1.27907567\), and \(q=2\) gives the largest radius,
    \(\rho_2=1.40126671\).
    \item The improvement is nonmonotone: \(q=3\) and \(q=1\) expose closer
    boundary collisions.
    \item The value \(\rho_2=1.40126671\) is not contradicted by the closer
    discriminant roots.  At \(1.206572\) both colliding labels \((0,1)\) are
    retained by \(q=2\); the candidates at \(1.238263\), \(1.244380\), and
    \(1.279076\) are internal to its complement.  The first collision crossing
    the \(q=2\) boundary is \((0,5)\).
\end{enumerate}

\section{Relation to the BBP transition}

The Baik--Ben Arous--P\'ech\'e transition describes the asymptotic separation
of a population spike from a random-matrix spectral bulk
\cite{bbp2005}.  For the standard additive rectangular model
\begin{equation}
    Y=X_0+\frac{\sigma}{\sqrt{\tau}}G,
    \qquad \frac{m}{\tau}\longrightarrow c,
\end{equation}
a singular-value spike of strength \(\theta\) separates from the noise bulk
under the following condition \cite{benaych2012}:
\begin{equation}
    \frac{\theta}{\sigma}>c^{1/4}
    \qquad\Longleftrightarrow\qquad
    \sigma<\sigma_{\mathrm{BBP}}=\frac{\theta}{c^{1/4}}
    \label{eq:rectangular-bbp}
\end{equation}

The reduced-nullity experiment and BBP theory share the same structural
idea: a selected spectral branch is reliable while it is separated from its
complement, and changing the selected rank changes which branch crossing is
relevant.  With several spikes, this naturally leads to a sequence of rank
transitions.

The threshold notions are nevertheless different by definition.  The
quantity \(\rho_q\) in \eqref{eq:rho-formula} is finite-dimensional, depends
on the realized matrices \(Z\) and \(\cE\), and is the complex-plane Taylor
radius along a fixed noise direction.  The visibility endpoints
\(\sigma_{\mathrm{lo}}(\delta_{\mathrm{vis}})\) and
\(\sigma_{\mathrm{hi}}(\delta_{\mathrm{vis}})\) instead come from an
ensemble-averaged normalized generalization statistic and change with the
chosen tolerance.  The BBP threshold is a deterministic real-axis phase
transition obtained after dimensions grow at a fixed aspect ratio.

The middle visibility interval is consistent with signal--noise mixing in a
finite sample.  BBP theory also depends on the boundary of the selected rank.
The visibility endpoints, however, depend on the tolerance and are not phase
transitions.

Establishing an asymptotic connection would require a suitable statistic---for
example \(\rho_q\) or the location of the maximum rank spread---and a proof,
over increasing \((m,\tau)\) and many noise realizations, that
\begin{equation}
    \rho_q^{(m,\tau)}\longrightarrow\sigma_{\mathrm{BBP}}
\end{equation}
and that the singular-vector overlap changes at the same limit.  The present
experiments therefore provide finite-sample analytic mechanisms, or
precursors, for BBP-type rank transitions; they do not yet prove an
asymptotic identity.

The present realization also demonstrates the distinction numerically.  If
we insert it naively into the asymptotic normalization, then
\begin{equation}
    c=\frac{6}{100}=0.06,
    \qquad
    \widehat\theta_Z=\frac{s_1(Z)}{\sqrt{100}}=1.0408844,
    \qquad
    \sigma_{\mathrm{BBP}}
       =\frac{\widehat\theta_Z}{c^{1/4}}\approx2.10312.
\end{equation}
This is not equal to the finite-sample full-nullity radius
\(\rho_5\approx1.24438\), as expected for one small realization outside a
dimension-growing experiment.

\section{Conclusion}

The compact formula, infinite-order recursion, and exceptional-point analysis
describe different aspects of the same SVD estimator.  For simple nullity,
the compact representation is exact, with the noisy eigenvalue determined by
one scalar closure equation, and its correct quadratic term is
\(M=(ZZ^T)^+EE^T\).  The first-order error is the first term of the Taylor
expansion of this closed compact system.  The conditional
population generalization risk is the exact quadratic functional
\(\sigma^2+\V{\epsilon}^TB\V{\epsilon}\), so it inherits both a compact
form and a consistently truncated coefficient convolution.  For multiple
nullity, the complete basis must be propagated as a block, and the
second-order matrix \(W\) selects the small-noise branches used by a reduced
SVD target.  
These identities describe the fixed-realization risks
\(R_{\mathrm{emp},q}\) and \(R_{\mathrm{gen},q}\).
The template's expected
errors \(\varepsilon_{\mathrm{emp},q}\) and
\(\varepsilon_{\mathrm{gen},q}\) are their outer training expectations;
their coefficient series follows termwise only when expectation and the
Taylor expansion can be interchanged.

For ordered SVD branches under Gaussian training, the low-noise
analysis proves the common leading law
\((1+n/(\tau-n-1))\sigma^2\); branch dependence first enters through the
fourth-order projector coefficients supplied by the infinite-order theory.
A separate inverse-noise expansion proves a second common endpoint,
\(\sigma^2+\|H\|_F^2/m\), with branch differences of order
\(\sigma^{-2}\).  Hence the normalized expected generalization profile is
asymptotically uniform at both endpoints.  In contrast, empirical branch
risks are exactly ordered at every noise level, and their normalized
small-noise profile converges to the strictly ordered mean-eigenvalue profile
of \(W\sim\operatorname{Wishart}_{q_0}(\tau-n,I)\).

The middle ordering is also now separated into theorem and experiment.  If
all nonzero eigenvalues of \(HH^T\) are equal, a conditional Wishart
column-swap argument proves strict expected generalization order at every
finite positive noise level; the endpoint expansions explain why this order
is visible principally at intermediate noise.  For unequal spikes, the exact
risk-gap identity weights cumulative population-overlap differences by the
population spectral gaps.  All relevant overlaps increase with SVD rank in
the seed-8 experiment, and a distribution-free concentration bound certifies
the three positive adjacent expected gaps at \(\sigma=\|H\|_2\) with
simultaneous \(99\%\) Monte Carlo confidence.  The sixth-order low-noise term
improves the lower-crossover estimate for this experiment, while the
high-noise spread is \(O((h_\star/\sigma)^4)\).

The exact convergence radius is the nearest exceptional point that crosses
the boundary between the retained cluster and its complement.  It is
computed from the discriminant and branch labels, independently of the
observed error curve.  A scalar generalization risk generically has the same
radius; only an exact cancellation of the exceptional-point monodromy can
increase it.  In the shared seed-8 experiment no such cancellation occurs,
and \(\rho_{\mathrm{gen},5}=1.24437996\).  The reduced generalization plot uses
the same nonmonotone radii as the projector plot, with \(q=2\) largest at
\(1.40126671\).  Reducing nullity can remove the collision that limits one
projector and thereby create a larger convergence disk, but the
thresholds are not monotone because moving the boundary can expose a
different, closer collision.  In the reproducible experiment, \(q=2\) is
optimal among the five tested nullities, with \(\rho_2=1.40126671\).

Finally, the divergence shown in the figures is not a numerical failure of
the exact SVD.  It is the failure of a Taylor expansion evaluated outside its
disk of convergence.  The analysis explains when a perturbative SVD error
formula stops representing the exact SVD subspace, and it identifies the
finite-dimensional spectral-separation mechanism related to BBP-type rank
transitions.  The visibility interval, the exceptional-point radius, and the
asymptotic BBP threshold are related through spectral mixing but remain
mathematically distinct quantities.

\end{document}